\documentclass[leqno,12pt]{amsart}
\usepackage{amssymb}
\usepackage{amsmath}
\usepackage{enumerate}
\usepackage{amsfonts}
\usepackage{hyperref}
\usepackage{mathrsfs}

\usepackage[headheight=18pt, top=20mm, bottom=20mm, left=22mm, right=22mm]{geometry}

\usepackage{tikz}
\usepackage{pgfplots}
\pgfplotsset{compat=1.18}

\definecolor{darkgreen}{RGB}{45, 119, 75}

\newtheorem{theorem}{Theorem}[section]
\newtheorem{corollary}[theorem]{Corollary}
\newtheorem{lemma}[theorem]{Lemma}
\newtheorem{proposition}[theorem]{Proposition}

\numberwithin{equation}{section}

\hypersetup{
    colorlinks=true,
    linkcolor= blue,
    citecolor =cyan,
    urlcolor = teal,
}

\makeatletter
\@ifundefined{c@part}{\newcounter{part}}{}

\renewcommand\part{%
  \@ifstar{\@spart}{\@part}%
}

\def\@part#1{%
  \refstepcounter{part}%
  \addcontentsline{toc}{part}{\protect\numberline{\thepart}#1}%
  \par\vspace{1\bigskipamount}
  \begin{center}%
    \normalfont\large\bfseries Part\ \thepart\quad #1%
  \end{center}%
  \vspace{1\bigskipamount}%
  \@afterheading
}

\def\@spart#1{%
  \addcontentsline{toc}{part}{#1}%
  \par\vspace{1\bigskipamount}
  \begin{center}%
    \normalfont\large\bfseries #1%
  \end{center}%
  \vspace{1\bigskipamount}%
  \@afterheading
}
\makeatother
  
\begin{document}

\title[Poisson representations of Dunkl harmonic functions]
{Poisson representations of Dunkl harmonic functions satisfying a Carleson condition}

\subjclass[2020]{Primary 42B35; Secondary 42B30, 31B10}
\keywords{Dunkl Laplacian, Dunkl harmonic functions,
Poisson representation, Carleson measures, BMO}

\author[Jacek Dziubański]{Jacek Dziubański}
\author[Rafał Łyżwa]{Rafał Łyżwa}

\begin{abstract}
We prove that a smooth Dunkl harmonic function in the
upper half-space whose Dunkl gradient satisfies a
Carleson measure condition is the Poisson integral of
a function in $\mathrm{BMO}(dw)$, defined using Euclidean
balls. The result holds for arbitrary finite root systems
and nonnegative multiplicities. The proof uses an
estimate for increments from the origin, obtained from
a finite-dimensional system indexed by the reflection
group, together with the atomic characterization of
the Dunkl Hardy space.
\end{abstract}

\address{Jacek Dziubański, Uniwersytet Wroc\l awski,
Instytut Matematyczny,
Pl. Grunwaldzki 2,
50-384 Wroc\l aw,
Poland}
\email{jdziuban@math.uni.wroc.pl}

\address{Rafał Łyżwa, Uniwersytet Wroc\l awski,
Instytut Matematyczny,
Pl. Grunwaldzki 2,
50-384 Wroc\l aw,
Poland}
\email{rafal.lyzwa@math.uni.wroc.pl}

\maketitle

\section{Introduction}

The relation between functions of bounded mean oscillation
and Carleson measures associated with their Poisson
extensions is classical. Fefferman and Stein \cite{FS}
proved that a locally integrable function $f$ satisfying
\[
    \int_{\mathbb R^N}
    \frac{|f(\mathbf x)|}{(1+\|\mathbf x\|)^{N+1}}
    \,d\mathbf x<\infty
\]
belongs to $\mathrm{BMO}(\mathbb R^N)$ if and only if
\[
    t\,|\nabla_{t,\mathbf x}u(t,\mathbf x)|^2
    \,d\mathbf x\,dt
\]
is a Carleson measure, where $u$ is the Poisson extension
of $f$. Subsequently, Fabes, Johnson, and Neri
\cite{Fabes} proved that every harmonic function on
the upper half-space satisfying this Carleson condition
is the Poisson extension of a function in
$\mathrm{BMO}(\mathbb R^N)$.
The purpose of the present paper is to establish
an analogue of the latter result for the Dunkl Laplacian.

We first introduce the notation needed to state
our result. Let $R\subset\mathbb R^N\setminus\{0\}$
be a finite reduced root system, normalized by
$\|\alpha\|^2=2$ for $\alpha\in R$.
For $\alpha\in R$, let
\[
    \sigma_\alpha(\mathbf x)
    =\mathbf x-\langle\alpha,\mathbf x\rangle\alpha
\]
be the corresponding reflection, and let $G$ be
the finite group generated by these reflections.
We fix a $G$-invariant multiplicity function
$k:R\to[0,\infty)$ and a positive subsystem $R^+$.
The associated measure is
\[
    dw(\mathbf x)=w(\mathbf x)\,d\mathbf x,
    \qquad
    w(\mathbf x)
    =\prod_{\alpha\in R^+}
        |\langle\alpha,\mathbf x\rangle|^{2k(\alpha)}.
\]
We write
\[
    \mathbf N=N+2\sum_{\alpha\in R^+}k(\alpha)
\]
for the homogeneous dimension.

For $\xi\in\mathbb R^N$, the Dunkl operator
introduced in \cite{Dunkl} is defined by
\[
    T_\xi f(\mathbf x)
    =
    \partial_\xi f(\mathbf x)
    +\sum_{\alpha\in R^+}k(\alpha)
        \langle\alpha,\xi\rangle
        \frac{f(\mathbf x)-f(\sigma_\alpha(\mathbf x))}
             {\langle\alpha,\mathbf x\rangle}.
\]
For $f\in C^1(\mathbb R^N)$, the quotients in this
formula are understood by continuous extension
across the reflection hyperplanes.
We set $T_j=T_{e_j}$, where $e_1,\ldots,e_N$ is
the canonical basis of $\mathbb R^N$, and denote
the Dunkl Laplacian by
\[
    {\Delta_k}=\sum_{j=1}^N T_j^2.
\]
Its self-adjoint realization on $L^2(dw)$ generates
the heat semigroup $e^{t{\Delta_k}}$.
The associated Poisson semigroup is
\[
    P_t=e^{-t\sqrt{-{\Delta_k}}},
    \qquad
    P_tf(\mathbf x)
    =\int_{\mathbb R^N}
        p_t(\mathbf x,\mathbf y)f(\mathbf y)
        \,dw(\mathbf y).
\]

We consider one-dimensional extension of $\mathbb{R}^N$, namely
\[
    \mathbb R^{1+N}=\mathbb{R} \times \mathbb{R}^N
\]
equipped with a root system $\mathcal{R}=\{(0,\alpha)\::\: \alpha \in R\}$ and multiplicity function ${k}((0,\alpha))=k(\alpha)$. We denote by  $\mathcal{G}$ the corresponding reflection group on $\mathbb{R}^{1+N}$, which is isomorphic to $G$.. Then $d\mu((t,\mathbf{x}))=dt\,dw(\mathbf{x})$ is the corresponding Dunkl measure. With a little abuse of notation, we denote by the same letter $T_j$, $j=1,\ldots, N$, the corresponding Dunkl operators i $\mathbb{R}^{1+N}$. Set $T_0=\frac{\partial}{\partial t}$. Then  the operator
\[
    \mathcal L=T_0^2+{\Delta_{k,\mathbf{x}}}.
\]
is the Dunkl Laplacian in $\mathbb{R}^{1+N}$.
For a smooth function $u$ defined on $\mathcal{G}$-invariant open subset $\Omega$ in $\mathbb R_+^{1+N}$, we
write
\[
    \widetilde\nabla_k u
    =(T_0u,T_1u,\ldots,T_Nu).
\]
We say that $u$ is $\mathcal L$-harmonic in $\Omega$ if
$\mathcal Lu(t,\mathbf{x})=0$ for $(t,\mathbf{x}) \in \Omega$.

All balls below are Euclidean balls. For
$f\in L^1_{\mathrm{loc}}(dw)$, let
\[
    f_B=\frac1{w(B)}\int_Bf\,dw,
    \qquad
    \|f\|_{\mathrm{BMO}(dw)}
    =
    \sup_B\frac1{w(B)}\int_B|f-f_B|\,dw.
\]
The space $\mathrm{BMO}(dw)$ consists of the functions
for which this quantity is finite, with functions
differing by a constant identified.

A nonnegative Borel measure $\nu$ on
$\mathbb R_+^{1+N}$ is called a Carleson measure if
\[
    \|\nu\|_{\mathcal C}
    =
    \sup_{\mathbf x_0\in\mathbb R^N,\ r>0}
    \frac{\nu((0,r)\times B(\mathbf x_0,r))}
         {w(B(\mathbf x_0,r))}
    <\infty.
\]
Correspondingly, we put
\[
    \|u\|_{**}
    =
    \sup_{\mathbf x_0\in\mathbb R^N,\ r>0}
    \left(
        \frac1{w(B(\mathbf x_0,r))}
        \int_0^r\int_{B(\mathbf x_0,r)}
        t\,|\widetilde\nabla_k u(t,\mathbf x)|^2
        \,dw(\mathbf x)\,dt
    \right)^{1/2}.
\]
Our main result is the following theorem.

\begin{theorem}\label{thm:main}
Let $R$ be a finite reduced root system and let
$k\geq0$ be a multiplicity function.
Suppose that
$u\in C^\infty(\mathbb R_+^{1+N};\mathbb C)$ satisfies
\[
    \mathcal Lu=0,
    \qquad
    \|u\|_{**}<\infty.
\]
Then there exists $f\in\mathrm{BMO}(dw)$ such that
\begin{equation}\label{eq:main-representation}
    u(t,\mathbf x)
    =\int_{\mathbb R^N}
        p_t(\mathbf x,\mathbf y)f(\mathbf y)
        \,dw(\mathbf y),
    \qquad
    (t,\mathbf x)\in\mathbb R_+^{1+N}.
\end{equation}
The integral in \eqref{eq:main-representation}
is absolutely convergent, and
\[
    \|f\|_{\mathrm{BMO}(dw)}
    \leq C\|u\|_{**},
\]
where $C$ depends only on $R$, $k$, and $N$.
\end{theorem}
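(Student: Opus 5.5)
We follow the strategy of Fabes, Johnson and Neri, adapted to the Dunkl setting and using the duality $(H^1_\Delta)^*=\mathrm{BMO}(dw)$.

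\textbf{Step 1: a priori control of $u$.} For $\varepsilon>0$ set $u_\varepsilon(t,\mathbf x)=u(t+\varepsilon,\mathbf x)$. Then $u_\varepsilon$ is $\mathcal L$-harmonic on a neighbourhood of $\overline{\mathbb R^{1+N}_+}$, and $\|u_\varepsilon\|_{**}\le C\|u\|_{**}$ uniformly in $\varepsilon$. We aim for the growth bound
\[
|u_\varepsilon(t,\mathbf x)-u_\varepsilon(1,0)|\le C\|u\|_{**}\bigl(1+\log(1+\|\mathbf x\|+t+t^{-1})\bigr).
\]
To get it, we use the Carleson condition together with mean value inequalities for $\mathcal L$-harmonic functions on $\mathcal G$-invariant regions. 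These give $|\widetilde\nabla_k u(t,\mathbf x)|\le C\|u\|_{**}t^{-1}$, with the sup taken over the orbit $\mathcal G(t,\mathbf x)$. Integrating $\partial_t u$ vertically then controls changes in $t$ logarithmically.

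Horizontal increments are harder. The Dunkl gradient controls only combinations of $\partial_j u$ and the differences $u(\mathbf x)-u(\sigma_\alpha\mathbf x)$, not $\nabla u$ itself. For the increment from the origin, we consider the finite vector $U(s)=(u(t,g(s\mathbf x)))_{g\in G}$ along the ray $s\mapsto s\mathbf x$. The derivative $\frac{d}{ds}U$ equals the vector of $\langle T u, g\mathbf x\rangle$ minus a matrix $\frac1s A\,U(s)$, where $A$ is built from the reflection differences. $A$ is positive semidefinite and annihilates constants. Solving this linear ODE system by variation of constants, and using the dissipativity of $A$, bounds $|U(s)-U(0)|$ by $\int_0^1\sup_g|\widetilde\nabla_k u|\,\|\mathbf x\|\,ds$. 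Combined with the gradient bound at height $t$, this gives the logarithmic increment estimate. I expect this ODE-system step to be the main obstacle: we must show that the matrix exponential $s^{-A}$-type propagator stays bounded despite the singular factor $1/s$ at the origin. We would use the explicit structure of $A=\sum_\alpha k(\alpha)\langle\alpha,\mathbf x\rangle^{2}\|\mathbf x\|^{-2}(I-\sigma_\alpha)$ acting on $\ell^2(G)$.

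\textbf{Step 2: boundary values and the Poisson representation of $u_\varepsilon$.} By Step 1, $f_\varepsilon=u(\varepsilon,\cdot)$ satisfies $\int|f_\varepsilon|(1+\|\mathbf x\|)^{-1}w(B(0,1+\|\mathbf x\|))^{-1}dw<\infty$, so $P_tf_\varepsilon$ converges absolutely by the Poisson kernel bounds. The difference $v=u_\varepsilon-P_tf_\varepsilon$ is $\mathcal L$-harmonic, vanishes at $t=0$, and grows at most logarithmically. Reflecting oddly in $t$ and applying a Liouville theorem for $\mathcal L$-harmonic functions of sublinear growth on $\mathbb R^{1+N}$ (via the Dunkl transform, polynomial growth forces a Dunkl-harmonic polynomial, and sublinear growth forces a constant) gives $v\equiv0$.

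\textbf{Step 3: uniform BMO bounds and passage to the limit.} For an $H^1_\Delta$ atom $a$ supported in a ball $B$ with $\int a\,dw=0$, we use the Green-type identity
\[
\int f_\varepsilon a\,dw = 4\int_0^\infty\!\!\int t\,\widetilde\nabla_k u_\varepsilon\cdot\overline{\widetilde\nabla_k P_t a}\,dw\,dt .
\]
It is justified by the decay of $P_ta$ and the growth bound from Step 1. The classical tent-space/Carleson duality then gives $|\int f_\varepsilon a\,dw|\le C\|u\|_{**}\|S(a)\|_{L^1}\le C\|u\|_{**}$, where $S$ is the Lusin area function, bounded on $H^1_\Delta$ by the atomic characterization. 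Hence $\|f_\varepsilon\|_{\mathrm{BMO}(dw)}\le C\|u\|_{**}$.

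Normalize $f_\varepsilon$ by subtracting $u(1,0)$. By Step 1, the family is then bounded in the weighted $L^1$ norm. Weak-$*$ compactness in $\mathrm{BMO}=(H^1_\Delta)^*$ gives a subsequence converging to some $f$ with $\|f\|_{\mathrm{BMO}}\le C\|u\|_{**}$. Since $p_t(\mathbf x,\cdot)-p_1(0,\cdot)$ is in $H^1_\Delta$, we get convergence $P_tf_\varepsilon\to P_tf$ up to constants. Since $u_\varepsilon\to u$ pointwise, the constants are absorbed into $f$, which yields \eqref{eq:main-representation}. Absolute convergence of the integral follows from the standard weighted integrability of BMO functions against $p_t(\mathbf x,\cdot)$.
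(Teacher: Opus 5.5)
Your outline is sound, but Steps~2 and~3 take a genuinely different route from the paper. Step~1 is essentially the paper's Lemmas~\ref{lem:increment-at-origin} and~\ref{lem:pointwise-logarithmic}, and your weak-$*$ limit with the mean-zero test functions $p_t(\mathbf x,\cdot)-p_1(0,\cdot)$ is the paper's argument. There is one slip in Step~1. Along the ray, $\langle\alpha,g\mathbf x\rangle/\langle\alpha,sg\mathbf x\rangle=1/s$, so the correct matrix is $A=\sum_{\alpha\in R^+}k(\alpha)(I-S_\alpha)$ with $(S_\alpha U)_g=U_{\sigma_\alpha g}$, without your factor $\langle\alpha,\mathbf x\rangle^2\|\mathbf x\|^{-2}$. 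Your $\ell^2$-dissipativity argument then works, since $A$ is symmetric and positive semidefinite, at the cost of a factor $|G|^{1/2}$; the paper gets constant $1$ because $e^{-sA}$ is row-stochastic.

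For the Poisson representation of $u_\varepsilon$, the paper uses neither a reflection principle nor a Liouville theorem. It applies Theorem~\ref{thm:bounded-Poisson-representation} to the bounded $\mathcal L$-harmonic functions $T_ju_\varepsilon$ and proves $T_jP_tf_\varepsilon=P_tT_jf_\varepsilon$ with a $G$-invariant cutoff. It then reuses Lemma~\ref{lem:increment-at-origin} to see that $u_\varepsilon-P_tf_\varepsilon$ is independent of $\mathbf x$, hence equals $at+b$. Growth kills $a$, and the explicit kernel $p_t(0,\cdot)$ kills $b$, so boundary convergence of $P_tf_\varepsilon$ is only needed at $\mathbf x=0$.

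Your route rests on three facts you take for granted:
\begin{enumerate}
\item $P_tf_\varepsilon\to f_\varepsilon$ locally uniformly at every boundary point. The bound \eqref{eq:Poisson-size} only sees the orbit distance, so this needs a $G$-invariant cutoff plus the boundary behaviour for bounded continuous data.
\item A Schwarz reflection principle for $\mathcal L$ across $\{t=0\}$. It is true, since $t\mapsto-t$ commutes with $\mathcal G$ and $\mathcal L$, but it must be proved, e.g.\ via the Dirichlet problem on balls centred at the origin and the maximum principle.
\item A polynomial-growth Liouville theorem via the Dunkl transform of tempered distributions.
\end{enumerate}

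Likewise, the paper's Carleson criterion uses only the vertical derivative. The spectral identity behind \eqref{eq:BMO-reproducing} and the $Q_t$ square-function characterization \eqref{eq:H1-square-characterization} are already available, and $t|\partial_tu|^2\leq t|\widetilde\nabla_ku|^2$. Your full-gradient Green identity has constant $2$; the constant $4$ belongs to the $t$-derivative alone. It also needs a uniform $L^1$ bound on atoms for the area function of $t\widetilde\nabla_kP_ta$. That bound follows from the $L^2$ identity and $\mathbf y$-regularity estimates for $tT_{j,\mathbf x}p_t(\mathbf x,\mathbf y)$, but it is not among the cited results.

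So your version stays closer to Fefferman--Stein and Fabes--Johnson--Neri, while the paper's is more economical: everything reduces to cited facts and Lemma~\ref{lem:increment-at-origin}. Finally, Step~1 does not give uniform weighted $L^1$ bounds for $f_\varepsilon-u(1,0)$, since your estimate degenerates like $\log(1/\varepsilon)$ at height $\varepsilon$. None are needed, though: the test functions have mean zero, so the uniform BMO bound suffices.
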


When $k\equiv0$, the operators $T_j$ are the ordinary
partial derivatives and $dw$ is Lebesgue measure.
In this case Theorem~\ref{thm:main} reduces to the
classical result of Fabes, Johnson, and Neri.
The one-dimensional Dunkl case was established
in \cite{Lyzwa}. Here we consider arbitrary
finite root systems.

The converse implication was proved by Dziuba\'nski
and Hejna \cite[Corollary~4.17]{DH-Uchiyama}:
for every $f\in\mathrm{BMO}(dw)$, its Poisson extension
$u(t,\mathbf x)=P_tf(\mathbf x)$ satisfies
\[
    \|u\|_{**}\leq C\|f\|_{\mathrm{BMO}(dw)}.
\]
Together with Theorem~\ref{thm:main}, this characterizes
the Poisson extensions of $\mathrm{BMO}(dw)$ functions
among smooth $\mathcal L$-harmonic functions
by the Carleson condition.

The proof uses the theory of harmonic functions
and the Hardy space $H^1_{\Delta_k}$ developed in
\cite{ADH-JFAA}, together with its Coifman--Weiss
atomic characterization obtained in \cite{DH-Studia}.
We also use the Dunkl mean-value formula
of Gallardo and Rejeb \cite{GR}.
An estimate needed in the argument is
\[
    |f(\mathbf x)-f(0)|
    \leq \|\mathbf x\|
    \sup_{\|\mathbf y\|\leq\|\mathbf x\|}
        |\nabla_kf(\mathbf y)|,
    \qquad f\in C^1(\mathbb R^N),
\]
where $\nabla_kf=(T_1f,\ldots,T_Nf)$.
It follows by considering the restrictions of $f$
to the rays through the points of a $G$-orbit.
Applied to the horizontal sections of $u$, this
estimate gives the logarithmic growth needed
to prove their Poisson representation. In the one-dimensional case, we also obtain a sharp estimate
for the ordinary derivative in terms of the Dunkl derivative:
\[
    \|f'\|_{L^\infty([-a,a])}
    \leq \frac{1+4k}{1+2k}
           \|Tf\|_{L^\infty([-a,a])}.
\]
The proof gives an explicit inverse of the operator relating
$f'$ to $Tf$.
The theorem then follows from a Carleson measure
criterion for $\mathrm{BMO}(dw)$ and a weak-$*$
compactness argument.

The paper is organized as follows. Section~\ref{sec:pre} collects the preliminaries. In Section~\ref{sec:Dunkl-derivative-estimates} we establish estimates in terms of Dunkl derivatives. Section~\ref{sec:pointwise} contains pointwise estimates for harmonic functions and the Poisson representation of their vertical translates. Section~\ref{sec:carleson} proves a Carleson measure criterion for $\mathrm{BMO}(dw)$. The proof of Theorem~\ref{thm:main} is given in Section~\ref{sec:proof-main-theorem}.

\section{Preliminaries}\label{sec:pre}

We retain the notation introduced in Section~1.
Implicit constants may depend on $R$, $k$, and $N$,
but not on the functions or scale parameters under
consideration. We write $A\lesssim B$ if $A\leq CB$
for such a constant, and $A\asymp B$ if both
$A\lesssim B$ and $B\lesssim A$ hold.

\subsection{The measure and Dunkl operators}

For a measurable set $E$, we write $w(E)=\int_Edw$.
The measure $dw$ is $G$-invariant and satisfies
\begin{equation}\label{eq:volume-estimate}
    w(B(\mathbf x,r))
    \asymp
    r^N\prod_{\alpha\in R^+}
    \bigl(|\langle\alpha,\mathbf x\rangle|+r
    \bigr)^{2k(\alpha)}.
\end{equation}
In particular, it is doubling. More precisely,
for $0<r\leq s$,
\begin{equation}\label{eq:volume-growth}
    C^{-1}\left(\frac sr\right)^N
    \leq
    \frac{w(B(\mathbf x,s))}{w(B(\mathbf x,r))}
    \leq
    C\left(\frac sr\right)^{\mathbf N}.
\end{equation}
Moreover,
\[
    w(B(a\mathbf x,ar))
    =a^{\mathbf N}w(B(\mathbf x,r)),
    \qquad a>0.
\]

We use the orbit distance and orbit notation
\[
    d(\mathbf x,\mathbf y)
    =\min_{g\in G}\|\mathbf x-g(\mathbf y)\|,
    \qquad
    \mathcal O(B)=\bigcup_{g\in G}gB.
\]
Thus
\[
    \{\mathbf y:d(\mathbf x,\mathbf y)<r\}
    =\mathcal O(B(\mathbf x,r)),
    \qquad
    w(\mathcal O(B))\leq |G|w(B).
\]

The Dunkl operators $T_j$ commute and are
skew-symmetric with respect to $dw$:
\begin{equation}\label{eq:Dunkl-integration-by-parts}
    \int_{\mathbb R^N}(T_jf)g\,dw
    =-\int_{\mathbb R^N}f(T_jg)\,dw,
\end{equation}
whenever, for example, $f\in C_c^\infty(\mathbb R^N)$
and $g\in C^1(\mathbb R^N)$.
If $\chi$ is $G$-invariant, then
\begin{equation}\label{eq:Dunkl-Leibniz-invariant}
    T_j(\chi f)=\chi T_jf+(\partial_j\chi)f.
\end{equation}
We refer to \cite{Dunkl,Roesler3,Anker} for the
basic facts concerning Dunkl operators.

\subsection{The Poisson semigroup}

The Dunkl Laplacian is essentially self-adjoint
on $\mathcal S(\mathbb R^N)$ in $L^2(dw)$.
Writing $H_t=e^{t{\Delta_k}}$, the subordination formula
takes the form
\[
    P_t
    =
    \frac1{\sqrt\pi}\int_0^\infty
        e^{-s}H_{t^2/(4s)}\,\frac{ds}{\sqrt s}.
\]
The Poisson kernel $p_t$ is smooth, positive,
and symmetric, and
\begin{equation}\label{eq:Poisson-mass}
    \int_{\mathbb R^N}p_t(\mathbf x,\mathbf y)
        \,dw(\mathbf y)=1.
\end{equation}
It satisfies the semigroup identity and
\begin{equation}\label{eq:Poisson-Dunkl-symmetry}
    T_{j,\mathbf x}p_t(\mathbf x,\mathbf y)
    =-T_{j,\mathbf y}p_t(\mathbf x,\mathbf y).
\end{equation}

We shall use the following kernel estimates
from \cite[Proposition 5.1]{ADH-JFAA}:
\begin{equation}\label{eq:Poisson-size}
    p_t(\mathbf x,\mathbf y)
    \lesssim
    \frac1{w(B(\mathbf x,t+d(\mathbf x,\mathbf y)))}
    \frac{t}{t+d(\mathbf x,\mathbf y)},
\end{equation}
\begin{equation}\label{eq:Poisson-Dunkl-derivative}
    |T_{j,\mathbf x}p_t(\mathbf x,\mathbf y)|
    \lesssim
    \frac1{w(B(\mathbf x,t+d(\mathbf x,\mathbf y)))}
    \frac1{t+d(\mathbf x,\mathbf y)},
\end{equation}
and, for $m\in\mathbb N$ and multi-indices $\beta,\eta$,
\begin{equation}\label{eq:Poisson-ordinary-derivatives}
    |\partial_t^m\partial_{\mathbf x}^{\beta}
        \partial_{\mathbf y}^{\eta}
        p_t(\mathbf x,\mathbf y)|
    \leq C_{m,\beta,\eta}
        t^{-m-|\beta|-|\eta|}
        p_t(\mathbf x,\mathbf y).
\end{equation}
Here and below, $\mathbb N$ includes zero.
At the origin,
\begin{equation}\label{eq:Poisson-at-origin}
    p_t(0,\mathbf y)
    =c_{R,k}\frac{t}
        {(t^2+\|\mathbf y\|^2)^{(\mathbf N+1)/2}},
\end{equation}
where $c_{R,k}>0$ is determined by
\eqref{eq:Poisson-mass}.

We set
\begin{equation}\label{eq:definition-Q}
    Q_t=t\sqrt{-{\Delta_k}}\,e^{-t\sqrt{-{\Delta_k}}}
        =-t\partial_tP_t,
    \qquad
    q_t(\mathbf x,\mathbf y)
        =-t\partial_tp_t(\mathbf x,\mathbf y).
\end{equation}
In particular,
$|q_t(\mathbf x,\mathbf y)|\lesssim
p_t(\mathbf x,\mathbf y)$.

\subsection{Harmonic functions}

We recall the volume mean-value formula of Gallardo
and Rejeb. Let $h_k(r,\mathbf x,\mathbf y)$ denote
the harmonic kernel introduced in \cite[Section~3]{GR}.
It is nonnegative and satisfies
\[
    \operatorname{supp}h_k(r,\mathbf x,\cdot)
    \subseteq
    \bigcup_{g\in G}\overline{B(g(\mathbf x),r)},
    \qquad
    \int_{\mathbb R^N}h_k(r,\mathbf x,\mathbf y)
        \,dw(\mathbf y)=w(B(0,r)).
\]
For an open $G$-invariant set $\Omega$, a continuous
function $f$ on $\Omega$, and
$\overline{B(\mathbf x,r)}\subset\Omega$, define
\[
    M_B^r f(\mathbf x)
    =
    \frac1{w(B(0,r))}
    \int_\Omega h_k(r,\mathbf x,\mathbf y)
        f(\mathbf y)\,dw(\mathbf y).
\]
We also set $M_B^0f(\mathbf x)=f(\mathbf x)$.

\begin{theorem}[{\cite[Theorem~3.1]{GR}}]
\label{teo:GR}
Let $\Omega\subseteq\mathbb R^N$ be open and
$G$-invariant, and let $f\in C^2(\Omega)$.
If $\mathbf x\in\Omega$ and $\rho>0$ satisfy
$\overline{B(\mathbf x,\rho)}\subset\Omega$, then,
for every $0<a\leq\rho/3$,
\begin{equation}\label{eq:GR-mean-value}
    M_B^a f(\mathbf x)
    =
    f(\mathbf x)
    +
    \frac1{a^{\mathbf N}}
    \int_0^a r^{\mathbf N-1}
        \int_0^r M_B^s({\Delta_k} f)(\mathbf x)
            \,s\,ds\,dr.
\end{equation}
In particular, if ${\Delta_k} f=0$ in $\Omega$, then
\[
    M_B^a f(\mathbf x)=f(\mathbf x).
\]
\end{theorem}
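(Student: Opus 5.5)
The plan is to prove the mean-value formula \eqref{eq:GR-mean-value} on the Dunkl transform side for globally defined test functions, and then localize to $\Omega$ using the support property of $h_k$. The result is quoted from \cite{GR}; this is how I would reconstruct it.

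\emph{Step 1 (identifying the kernel).} I would identify $h_k(r,\mathbf x,\mathbf y)$ with the Dunkl generalized translate $\tau_{\mathbf x}\mathbf 1_{B(0,r)}(-\mathbf y)$ of the indicator of the ball. Nonnegativity and the support inclusion in $\bigcup_{g}\overline{B(g(\mathbf x),r)}$ are taken from the listed properties of $h_k$. For $F\in C_c^\infty(\mathbb R^N)$ this gives
\[
M_B^rF(\mathbf x)=\int_{\mathbb R^N}E(i\mathbf x,\xi)\,m_r(\xi)\,\mathcal F_kF(\xi)\,dw(\xi),
\]
where $E$ is the Dunkl kernel, $\mathcal F_k$ the Dunkl transform, and
\[
m_r(\xi)=\frac{\mathcal F_k\mathbf 1_{B(0,r)}(\xi)}{w(B(0,r))}.
\]
Because $\mathbf 1_{B(0,r)}$ is radial, $m_r(\xi)=\varphi(r\|\xi\|)$ for one fixed normalized Bessel-type function $\varphi$ of order $\mathbf N/2$ with $\varphi(0)=1$.

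\emph{Step 2 (scalar identity).} Since $\mathcal F_k(\Delta_kF)(\xi)=-\|\xi\|^2\mathcal F_kF(\xi)$, the formula for $F$ reduces to the scalar identity
\[
m_a(\xi)=1-\frac{\|\xi\|^2}{a^{\mathbf N}}\int_0^a r^{\mathbf N-1}\int_0^r m_s(\xi)\,s\,ds\,dr .
\]
To prove it, write the ball mean as an average of sphere means: $m_r=\mathbf N r^{-\mathbf N}\int_0^r s^{\mathbf N-1}\psi(s\|\xi\|)\,ds$, where $\psi=j_{\mathbf N/2-1}$ is the radial eigenfunction. It satisfies
\[
\psi''+\frac{\mathbf N-1}{s}\psi'=-\psi,\qquad \psi(0)=1,\quad \psi'(0)=0 .
\]
Integrating this ODE twice, with the weight $s^{\mathbf N-1}$ and then the $r^{\mathbf N-1}$ average, should produce exactly the nested integral. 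The normalization $a^{-\mathbf N}\int_0^a r^{\mathbf N-1}\cdots$ matches the factor $\mathbf N$ hidden in $w(B(0,r))\propto r^{\mathbf N}$. This step is routine bookkeeping, but I would check the constants carefully.

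\emph{Step 3 (localization).} For general $f\in C^2(\Omega)$, take a $G$-invariant cutoff $\chi\in C_c^\infty(\Omega)$ with $\chi=1$ on a neighbourhood of $\mathcal O(\overline{B(\mathbf x,2\rho/3)})$. Then apply Steps 1–2 to $F=\chi f$, after mollifying if only $C^2$ regularity is available.
\begin{itemize}
\item For $s\le a$, the kernels $h_k(s,\mathbf x,\cdot)$ are supported in $\mathcal O(\overline{B(\mathbf x,a)})$, where $F=f$.
\item The operator $\Delta_k$ is nonlocal only through reflections, so $\Delta_kF=\Delta_kf$ on the $G$-invariant set where $\chi\equiv1$. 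The same holds for $\Delta_k(\chi f)$, by \eqref{eq:Dunkl-Leibniz-invariant} and the vanishing of $\partial\chi$ there.
\end{itemize}
The restriction $a\le\rho/3$ gives the room needed to place this cutoff. I read the hypothesis $\overline{B(\mathbf x,\rho)}\subset\Omega$ together with the $G$-invariance of $\Omega$ as guaranteeing that all the orbit balls lie in $\Omega$. The harmonic case follows by setting $\Delta_kf=0$.

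\emph{Main obstacle.} I expect the real difficulty to be Step 1: justifying that the translated indicator is a genuine nonnegative kernel with the stated orbit support. This is a R\"osler-type support theorem for Dunkl translations of radial functions. Here I would rely on the properties of $h_k$ already recorded above from \cite{GR}, rather than reprove them.
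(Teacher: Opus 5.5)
The paper does not prove this statement. It is imported from \cite[Theorem~3.1]{GR} and used as a black box, in the proof of Lemma~\ref{lem:pointwise-logarithmic}, where $h_k$ is realized as the translated indicator of a ball, exactly as in your Step~1. So there is no in-paper argument to compare with, and your reconstruction has to stand on its own. It does.

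The multiplier identity in Step~2 is exact. With $\Psi(s)=j_{\mathbf N/2-1}(s\|\xi\|)$, integrating $(s^{\mathbf N-1}\Psi')'=-\|\xi\|^2s^{\mathbf N-1}\Psi$ over $[0,r]$ gives $\Psi'(r)=-\mathbf N^{-1}\|\xi\|^2\,r\,m_r(\xi)$. Hence $\Psi(r)=1-\mathbf N^{-1}\|\xi\|^2\int_0^r s\,m_s(\xi)\,ds$. Averaging against $\mathbf N a^{-\mathbf N}r^{\mathbf N-1}\,dr$ then yields precisely the Fourier-side form of \eqref{eq:GR-mean-value}. For Schwartz $F$, the interchange of integrals is justified by $|m_s|\le1$ and $|E(i\mathbf x,\xi)|\le1$. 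The localization in Step~3 is also right, because $\Delta_kF(\mathbf y)$ depends only on $F$ near $\mathbf y$ and at the points $\sigma_\alpha(\mathbf y)$.

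Two points should be tightened.

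First, make the regularization explicit. For $F=\chi f\in C^2_c$, the transform $\mathcal F_kF$ need not be integrable when $\mathbf N$ is large, so pointwise inversion at $\mathbf x$ is not available. Moreover, Euclidean mollification does not commute with $\Delta_k$. Two facts make the limit pass anyway:
\begin{enumerate}
\item $\|\Delta_kG\|_\infty\lesssim\|G\|_{C^2}$, since the reflection terms are controlled by second derivatives along the segment from $\sigma_\alpha(\mathbf y)$ to $\mathbf y$;
\item $|M_B^sH(\mathbf x)|\le\sup_{\mathcal O(\overline{B(\mathbf x,s)})}|H|$, by positivity and normalization of $h_k$.
\end{enumerate}
Hence both sides of the identity for $F*\phi_\varepsilon$ converge to those for $F$, because $F*\phi_\varepsilon\to F$ in $C^2$.

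Second, your remark that $a\le\rho/3$ ``gives the room'' for the cutoff is not accurate. The set $\mathcal O(\overline{B(\mathbf x,\rho)})$ is already a compact subset of the open $G$-invariant set $\Omega$. So a $G$-invariant cutoff equal to $1$ near $\mathcal O(\overline{B(\mathbf x,a)})$ exists for every $a\le\rho$, and your argument gives the formula on that larger range. The factor $3$ plays no role in your proof.
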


We have the following theorem.

\begin{theorem}[{\cite[Theorem~7.3]{ADH-JFAA}}]
\label{thm:bounded-Poisson-representation}
Every bounded $\mathcal L$-harmonic function
$v\in C^2(\mathbb R_+^{1+N};\mathbb C)$
has a representation
\[
    v(t,\mathbf x)=P_th(\mathbf x)
\]
for some $h\in L^\infty(dw)$.
If, in addition, $v$ extends continuously to
$[0,\infty)\times\mathbb R^N$, then
\begin{equation}\label{eq:bounded-harmonic-representation}
    v(t,\mathbf x)=P_t(v(0,\cdot))(\mathbf x).
\end{equation}
\end{theorem}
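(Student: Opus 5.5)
The plan is to reduce to boundary data that is continuous, prove uniqueness by a maximum principle, and then let the data tend to the boundary using weak-$*$ compactness. Let $v$ be bounded and $\mathcal L$-harmonic, and fix $\varepsilon>0$. Put $v_\varepsilon(t,\mathbf x)=v(t+\varepsilon,\mathbf x)$ and $h_\varepsilon=v(\varepsilon,\cdot)$, so that $\|h_\varepsilon\|_\infty\le\|v\|_\infty$. The function $v_\varepsilon$ is bounded and $\mathcal L$-harmonic. It also extends smoothly to $[0,\infty)\times\mathbb R^N$, with boundary value $h_\varepsilon$, which is continuous. The first step is to prove \eqref{eq:bounded-harmonic-representation} for every bounded $\mathcal L$-harmonic $v$ that extends continuously to the closed half-space; applying this to $v_\varepsilon$ gives $v(t+\varepsilon,\mathbf x)=P_th_\varepsilon(\mathbf x)$. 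The second step is a compactness argument. Choose $\varepsilon_n\to0$ with $h_{\varepsilon_n}\to h$ weak-$*$ in $L^\infty(dw)$. Since $p_t(\mathbf x,\cdot)\in L^1(dw)$ by \eqref{eq:Poisson-size}, we get $P_th_{\varepsilon_n}(\mathbf x)\to P_th(\mathbf x)$. Continuity of $v$ gives $v(t+\varepsilon_n,\mathbf x)\to v(t,\mathbf x)$, so $v=P_th$ and $\|h\|_\infty\le\|v\|_\infty$.

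For the first step, set $W(t,\mathbf x)=v(t,\mathbf x)-P_t(v(0,\cdot))(\mathbf x)$. Since $v(0,\cdot)$ is bounded and continuous, $P_t(v(0,\cdot))$ is bounded and $\mathcal L$-harmonic: $\partial_t^2P_t=-\Delta_kP_t$ by \eqref{eq:definition-Q}, and differentiation under the integral is justified by \eqref{eq:Poisson-ordinary-derivatives}. It tends to $v(0,\mathbf x)$ as $t\to0$ because $\{p_t\}$ is an approximate identity, by \eqref{eq:Poisson-mass} and \eqref{eq:Poisson-size}. Hence $W$ is bounded, $\mathcal L$-harmonic, continuous on the closed half-space, and $W(0,\cdot)=0$. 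Splitting into real and imaginary parts, it suffices to show that such a real $W$ vanishes identically.

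The main tool is a weak maximum principle on $\mathcal G$-invariant bounded domains $D_{T,\rho}=(0,T)\times B(0,\rho)$. Suppose a $C^2$ function $F$ with $\mathcal LF\ge0$ attained an interior strict maximum. I would argue via Theorem~\ref{teo:GR} applied in $\mathbb R^{1+N}$, with a small perturbation $F+\eta\|(t,\mathbf x)\|^2$ making $\mathcal L F>0$. The difficulty is that $h_k(r,\cdot,\cdot)$ averages over the whole orbit $\bigcup_g B(g(\mathbf x),r)$, not a single ball. I would handle this by applying the mean value formula to the $\mathcal G$-invariant function $\max_{g}F(t,g\mathbf x)$, or by using the known fact that $\Delta_k$ satisfies a maximum principle on $G$-invariant domains (orbit points of a maximizer are also in the domain). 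This also exploits the $\mathcal G$-invariance of $D_{T,\rho}$.

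With the maximum principle available, use the barrier
\[
\psi(t,\mathbf x)=\frac{\|W\|_\infty}{\rho^2}\bigl(\|\mathbf x\|^2-\mathbf N t^2+\mathbf N Tt\bigr)+\frac{\|W\|_\infty}{T}\,t .
\]
It is $\mathcal L$-harmonic because $\Delta_k\|\mathbf x\|^2=2\mathbf N$ and $T_0^2t^2=2$. It is nonnegative on $D_{T,\rho}$ because $t\le T$. It dominates $|W|$ on $\partial D_{T,\rho}$: on $t=0$ we have $W=0$, on $\|\mathbf x\|=\rho$ the first term is at least $\|W\|_\infty$, and on $t=T$ the last term is $\|W\|_\infty$. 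The maximum principle then gives $|W(t,\mathbf x)|\le\psi(t,\mathbf x)$. Fix $(t,\mathbf x)$, let $\rho\to\infty$, and then $T\to\infty$; this yields $W(t,\mathbf x)=0$. I expect the main obstacle to be the Dunkl maximum principle on $\mathcal G$-invariant domains, since the mean value kernel is not supported in a single ball. This is why I would work throughout with $\mathcal G$-invariant cylinders and orbit maxima.
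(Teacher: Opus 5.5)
\textbf{Comparison with the paper.} Your route is genuinely different from the paper's. The paper does not prove this statement. It imports the first assertion from \cite[Theorem~7.3]{ADH-JFAA} and obtains the second by identifying $h$ with $v(0,\cdot)$ through the boundary convergence of Poisson integrals in \cite[Corollary~5.4]{ADH-JFAA}. You reverse the logic. You prove the continuous-boundary case directly by a maximum principle and an explicit barrier, and then get the general case by weak-$*$ compactness of the traces $v(\varepsilon,\cdot)$ in $L^\infty(dw)=L^1(dw)^*$.

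The compactness step is fine. So is the barrier: $\Delta_k\|\mathbf x\|^2=2\mathbf N$ gives $\mathcal L\psi=0$, and your three boundary comparisons on $\partial D_{T,\rho}$ are correct. Your route gives a nearly self-contained proof, with $\|h\|_\infty\le\|v\|_\infty$ for free. Note also that the paper only ever uses the second assertion (in Proposition~\ref{prop:vertical-translates}), which is exactly what your first step proves.

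\textbf{The maximum principle.} This is less of an obstacle than you expect. Suppose $F$ attains its maximum over $\overline{D_{T,\rho}}$ at an interior point $\mathbf z_0$. By $\mathcal G$-invariance of $D_{T,\rho}$, $F(\sigma_\alpha\mathbf z_0)\le F(\mathbf z_0)$, so every difference term in $\mathcal LF(\mathbf z_0)$ is $\le0$ (recall $k\ge0$). Also $\nabla F(\mathbf z_0)=0$ and the Hessian is $\le0$. On a reflection hyperplane the difference term extends continuously to a positive multiple of $\partial_\alpha^2F(\mathbf z_0)$. Hence $\mathcal LF(\mathbf z_0)\le0$, contradicting $\mathcal LF>0$. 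Equivalently, Theorem~\ref{teo:GR} can be applied at a \emph{global} maximum, since the orbit balls stay inside the invariant domain. Drop the variant with $\max_gF(t,g\mathbf x)$: that function is in general not $C^2$, so Theorem~\ref{teo:GR} does not apply to it.

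\textbf{The step that fails.} You claim that $P_t(v(0,\cdot))\to v(0,\cdot)$ ``because $\{p_t\}$ is an approximate identity, by \eqref{eq:Poisson-mass} and \eqref{eq:Poisson-size}''. This justification does not work. The bound \eqref{eq:Poisson-size} is expressed through the orbit distance $d(\mathbf x,\mathbf y)$. Together with \eqref{eq:Poisson-mass}, it only shows that the mass of $p_t(\mathbf x,\cdot)$ concentrates near the whole orbit $\{g\mathbf x:g\in G\}$. For example, the symmetrized kernel $|G|^{-1}\sum_{g\in G}p_t(\mathbf x,g\mathbf y)$ satisfies both properties and is still $\mathcal L$-harmonic in $(t,\mathbf x)$. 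Yet its integrals against $f$ converge to the orbit average $|G|^{-1}\sum_{g}f(g\mathbf x)$, not to $f(\mathbf x)$.

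You therefore need a genuinely Dunkl-specific input that rules out mass escaping to $g\mathbf x\neq\mathbf x$. Two options:
\begin{enumerate}
\item the Feller property of $e^{t\Delta_k}$ on $C_0(\mathbb R^N)$ (R\"osler), transferred to $P_t$ by subordination;
\item the result \cite[Corollary~5.4]{ADH-JFAA} that the paper itself invokes.
\end{enumerate}

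The barrier argument also needs $W$ to be continuous on $\overline{D_{T,\rho}}$. That requires locally uniform convergence $P_tf\to f$ for bounded continuous $f$, not merely convergence along vertical lines. The Feller property gives this. The tail mass $\int_{\|\mathbf y-\mathbf x\|\ge\delta}p_t(\mathbf x,\mathbf y)\,dw(\mathbf y)$ is bounded by $1-P_t\phi(\mathbf x)$ for a suitable cutoff $\phi\in C_c(\mathbb R^N)$ with $0\le\phi\le1$ and $\phi=1$ near $\mathbf x$. Since $P_t\phi\to\phi$ uniformly, this tends to $0$ locally uniformly. With that ingredient supplied, your proof is complete.
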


The second assertion follows by identifying $h$
with the boundary values of $v$, using
\cite[Corollary~5.4]{ADH-JFAA}.
The complex-valued case follows by applying
the result to the real and imaginary parts.

\subsection{Dunkl translations}

We shall use Dunkl translations of radial functions.
Let $\mu_{\mathbf x}$ be the probability measure
representing the Dunkl intertwining operator,
supported in $\operatorname{conv}\mathcal{O}(\mathbf x)$.
R\"osler's formula
\cite[Theorem~5.1 and its proof]{Roesler-product}
extends the translation of radial functions to
bounded Borel radial functions
$f(\mathbf y)=\widetilde f(\|\mathbf y\|)$ by
\[
    \tau_{\mathbf x}f(-\mathbf y)
    =
    \int_{\mathbb R^N}
    \widetilde f\!\left(
        \sqrt{\|\mathbf x\|^2+\|\mathbf y\|^2
              -2\langle\eta,\mathbf y\rangle}
    \right)\,d\mu_{\mathbf x}(\eta).
\]
In particular, this formula defines the translated
characteristic function of a ball and shows that
Dunkl translations preserve positivity on radial
functions. It will be used in
$\mathbb R^{1+N}$ for the root system
$\{(0,\alpha):\alpha\in R\}$.
\subsection{Hardy space and duality}

For $g\in L^1(dw)$, let
\[
    \mathcal M_Pg(\mathbf x)
    =
    \sup_{{t>0\;
        \|\mathbf x-\mathbf y\|<t}}
        |P_tg(\mathbf y)|.
\]
The Hardy space associated with ${\Delta_k}$ is
\[
    H^1_{\Delta_k}
    =
    \{g\in L^1(dw):\mathcal M_Pg\in L^1(dw)\},
    \qquad
    \|g\|_{H^1_{\Delta_k}}
    =\|\mathcal M_Pg\|_{L^1(dw)}.
\]
Its square-function characterization
\cite{ADH-JFAA} gives
\begin{equation}\label{eq:H1-square-characterization}
    \|g\|_{H^1_{\Delta_k}}\asymp\|Sg\|_{L^1(dw)},
\end{equation}
where
\[
    Sg(\mathbf x)
    =
    \left(
        \int_0^\infty
        \int_{\|\mathbf x-\mathbf y\|<t}
        |Q_tg(\mathbf y)|^2
        \frac{dw(\mathbf y)\,dt}
             {t\,w(B(\mathbf x,t))}
    \right)^{1/2}.
\]

We also recall the atomic Hardy space on a space of homogeneous type $(\mathbb{R}^N,\|\cdot-\cdot\|,dw)$.
A $(1,\infty)$-atom is a measurable function $a$
supported in a Euclidean ball $B$ such that
\[
    \|a\|_{L^\infty(dw)}\leq w(B)^{-1},
    \qquad
    \int_{\mathbb R^N}a\,dw=0.
\]
The space $H^1_{\mathrm{at}}(dw)$ consists of all
$f\in L^1(dw)$ admitting a representation
\[
    f=\sum_{j=1}^\infty\lambda_j a_j
    \quad\text{in }L^1(dw),
    \qquad
    \sum_{j=1}^\infty|\lambda_j|<\infty,
\]
where the $a_j$ are $(1,\infty)$-atoms.
Its norm is the infimum of $\sum_j|\lambda_j|$
over all such representations.

By \cite{DH-Studia}, $H^1_{\Delta_k}$ coincides, with $H^1_{\mathrm{at}}(dw)$.

In particular, bounded, compactly supported functions
with integral zero form a dense subspace of
$H^1_{\Delta_k}$.

The atomic characterization and \cite{CW} imply
\begin{equation}\label{eq:H1-BMO-duality}
    (H^1_{\Delta_k})^*=\mathrm{BMO}(dw)
\end{equation}
with equivalent norms.
The functional associated with $f\in\mathrm{BMO}(dw)$
is denoted by $\Phi_f$ and is initially defined by
\[
    \langle\Phi_f,g\rangle
    =\int_{\mathbb R^N}fg\,dw
\]
for bounded, compactly supported $g$ of integral zero.

We shall also use
\begin{equation}\label{eq:absolutely-integrable-pairing}
    \langle\Phi_f,g\rangle=\int_{\mathbb R^N}fg\,dw
    \quad\text{if }f\in\mathrm{BMO}(dw),\
    g\in H^1_{\Delta_k},\ \text{and }fg\in L^1(dw).
\end{equation}
This follows by the standard truncation argument;
see \cite[Chapter~IV]{Stein-Harmonic} for the classical
$H^1$--$\mathrm{BMO}$ theory.
Indeed, for real-valued $f$, its truncations
$f_m=\max(-m,\min(m,f))$ have uniformly bounded
BMO seminorms and converge to $f$ locally in $L^1(dw)$.
The corresponding functionals converge on the dense
subspace above, and hence on all of $H^1_{\Delta_k}$.
Dominated convergence applied to $f_mg$ proves
\eqref{eq:absolutely-integrable-pairing}.
The complex-valued case follows by splitting $f$
into its real and imaginary parts.

The space $H^1_{\Delta_k}$ is separable: by the Riesz
transform characterization
\cite[Theorem~2.5]{ADH-JFAA}, the map
$h\mapsto(h,R_1h,\ldots,R_Nh)$, where
$R_j=T_j(-{\Delta_k})^{-1/2}$, identifies it, with an
equivalent norm, with a subspace of the separable
space $(L^1(dw))^{N+1}$.

\subsection{Integrability of Poisson extensions}

Put
\[
    \rho(\mathbf x)=p_1(0,\mathbf x)
    \asymp(1+\|\mathbf x\|)^{-\mathbf N-1}.
\]
Every function of at most logarithmic growth belongs
to $L^1(\rho\,dw)$.
The same is true for every $f\in\mathrm{BMO}(dw)$;
more precisely,
\begin{equation}\label{eq:BMO-weighted-integrability}
    \int_{\mathbb R^N}|f(\mathbf x)|\rho(\mathbf x)
        \,dw(\mathbf x)
    \lesssim
    \|f\|_{\mathrm{BMO}(dw)}
    +|f_{B(0,1)}|.
\end{equation}
To see this, decompose $\mathbb R^N$ into $B(0,1)$
and concentric dyadic annuli, and use
\[
    |f_{B(0,2^j)}-f_{B(0,1)}|
    \lesssim j\|f\|_{\mathrm{BMO}(dw)}.
\]
The resulting series is bounded by a constant
multiple of $\sum_{j\geq0}(j+1)2^{-j}$.

If $f\in L^1(\rho\,dw)$, the estimates
\eqref{eq:Poisson-size}--\eqref{eq:Poisson-ordinary-derivatives}
imply that $P_tf$ is absolutely convergent and defines
a smooth $\mathcal L$-harmonic function. Moreover,
\begin{equation}\label{eq:Poisson-differentiation}
\begin{aligned}
    \partial_tP_tf(\mathbf x)
    &=\int_{\mathbb R^N}
        \partial_tp_t(\mathbf x,\mathbf y)f(\mathbf y)
        \,dw(\mathbf y),\\
    T_jP_tf(\mathbf x)
    &=\int_{\mathbb R^N}
        T_{j,\mathbf x}p_t(\mathbf x,\mathbf y)
        f(\mathbf y)\,dw(\mathbf y).
\end{aligned}
\end{equation}
These assertions follow by dominated convergence,
using locally uniform majorants for the kernels
and their derivatives.
If $f$ is continuous at zero, then
\begin{equation}\label{eq:Poisson-limit-at-origin}
    \lim_{t\to0^+}P_tf(0)=f(0).
\end{equation}
This follows from \eqref{eq:Poisson-at-origin} by
splitting the integral into a small ball centered
at zero and its complement.

\subsection{Carleson measures and tents}

For a Euclidean ball $B$, set
\[
    \widehat B
    =
    \{(t,\mathbf y):t>0,\ B(\mathbf y,t)\subset B\}.
\]
Since $\widehat B\subset Q(2B)$ and
$Q(B)\subset\widehat{2B}$, where
$Q(B)=(0,r_B)\times B$, the doubling property
$w(2B)\leq Cw(B)$ of the Dunkl measure $dw$ yields
\[
    \|\nu\|_{\mathcal C}
    \asymp \sup_B\frac{\nu(\widehat B)}{w(B)}.
\]

\section{Estimates in terms of Dunkl derivatives}
\label{sec:Dunkl-derivative-estimates}

We first prove an estimate for the increment of a function
from the origin in terms of its Dunkl gradient.
Lemma~\ref{lem:increment-at-origin} will be used in the
proof of the main theorem.

We call a vector $\xi\in\mathbb R^N$ regular if
$\langle\alpha,\xi\rangle\neq0$ for every
$\alpha\in R$, that is, if it lies outside all reflection
hyperplanes. In particular, regular unit vectors form
a dense subset of the unit sphere.

\begin{lemma}\label{lem:increment-at-origin}
For every $f\in C^1(\mathbb R^N;\mathbb C)$ and
$\mathbf x\in\mathbb R^N\setminus\{0\}$,
\begin{equation}\label{eq:increment-at-origin}
\begin{aligned}
    |f(\mathbf x)-f(0)|
    &\leq \|\mathbf x\|
        \max_{g\in G}\sup_{0\leq t\leq1}
        \left|
            T_{g(\mathbf x/\|\mathbf x\|)}
            f(tg(\mathbf x))
        \right|\leq \|\mathbf x\|
        \sup_{\|\mathbf y\|\leq\|\mathbf x\|}
        |\nabla_k f(\mathbf y)|,
\end{aligned}
\end{equation}
where
\[
    \nabla_k f=(T_1f,\ldots,T_Nf),
    \qquad
    T_{\boldsymbol\xi}f
    =\sum_{j=1}^N\xi_jT_jf.
\]
In particular, if $T_jf=0$ for every $j=1,\ldots,N$,
then $f$ is constant.
\end{lemma}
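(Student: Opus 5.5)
The plan is to restrict $f$ to the rays through the $G$-orbit of $\mathbf x$. This turns the Dunkl gradient along those rays into a singular linear ODE system indexed by $G$, and I will solve that system explicitly with a sub-stochastic propagator.

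Fix $\mathbf x\neq0$ and put $r=\|\mathbf x\|$ and $\xi=\mathbf x/r$. For $g\in G$ and $t\in[0,1]$ define $\varphi_g(t)=f(tg(\mathbf x))$. Then $\varphi_g'(t)=r\,\partial_{g\xi}f(tg(\mathbf x))$. At the point $\mathbf y=tg(\mathbf x)$ with $t>0$ we have $\langle\alpha,\mathbf y\rangle=tr\langle\alpha,g\xi\rangle$. Hence each reflection term in $T_{g\xi}f(\mathbf y)$ equals
$$k(\alpha)\frac{f(tg(\mathbf x))-f(t\sigma_\alpha g(\mathbf x))}{tr}$$
whenever $\langle\alpha,g\xi\rangle\neq0$.

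If $\langle\alpha,g\xi\rangle=0$, the term in $T_{g\xi}$ vanishes because of the factor $\langle\alpha,g\xi\rangle$ in the definition of $T_\xi$. The right-hand side above also vanishes, since then $\sigma_\alpha g(\mathbf x)=g(\mathbf x)$. So no regularity assumption on $\xi$ is needed; alternatively one can argue first for regular $\xi$ and pass to the limit by density and continuity. Writing $\Phi(t)=(\varphi_g(t))_{g\in G}$ and $F_g(t)=r\,T_{g\xi}f(tg(\mathbf x))$, we obtain for $0<t\leq1$ the system
$$\Phi'(t)+\frac1t A\Phi(t)=F(t),\qquad A=\sum_{\alpha\in R^+}k(\alpha)(I-S_\alpha).$$
Here $S_\alpha$ is the permutation matrix of $g\mapsto\sigma_\alpha g$.

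Next I record the structure of $A$. Since $A\mathbf 1=0$, the vector $\Psi(t)=\Phi(t)-f(0)\mathbf 1$ satisfies the same equation. Moreover $\Psi(0)=0$ and $\Psi(t)=O(t)$, because $f\in C^1$. Write $A=cI-K$ with $c=\sum_{\alpha\in R^+}k(\alpha)$ and $K=\sum_{\alpha\in R^+}k(\alpha)S_\alpha$; the matrix $K$ has nonnegative entries and row sums $c$. Therefore, for $\tau\geq0$,
$$e^{-\tau A}=e^{-c\tau}e^{\tau K}$$
is a nonnegative matrix with row sums $1$. It is thus a contraction on $\ell^\infty(G)$.

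The solution formula follows from these facts. For $0<s\leq1$ we have $\frac{d}{ds}\bigl(s^{A}\Psi(s)\bigr)=s^{A}F(s)$, where $s^A=e^{(\log s)A}$ has $\ell^\infty$-operator norm at most $1$. Since $\Psi(s)=O(s)$ and $s^A$ is bounded, $s^A\Psi(s)\to0$ as $s\to0^+$. Integrating from $0$ to $1$ gives
$$\Psi(1)=\int_0^1 s^{A}F(s)\,ds.$$
Consequently
$$|f(\mathbf x)-f(0)|\leq\|\Psi(1)\|_\infty\leq\sup_{0<s\leq1}\|F(s)\|_\infty.$$
This is the first inequality in \eqref{eq:increment-at-origin}; the value $s=0$ is included by continuity.

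The second inequality follows from the Cauchy--Schwarz bound $|T_{g\xi}f(\mathbf y)|\leq|\nabla_kf(\mathbf y)|$, since $g\xi$ is a unit vector, together with $\|tg(\mathbf x)\|\leq\|\mathbf x\|$. The final assertion is immediate: if $\nabla_k f\equiv0$, then $f(\mathbf x)=f(0)$ for every $\mathbf x$. The main obstacle is the singular coefficient $1/t$ at the origin. It is handled by the sub-stochastic property of $s^A$ together with the vanishing of $\Psi(s)$ at rate $O(s)$, which makes the boundary term at $0$ disappear.
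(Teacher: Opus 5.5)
Your proof is correct and follows the paper's argument step for step. You use the same $G$-indexed system $\Phi'+t^{-1}A\Phi=F$ along the orbit rays, the same fact that $e^{-\tau A}$ is a nonnegative matrix with unit row sums (hence an $\ell^\infty(G)$-contraction), and the same integrating factor $s^A$, with the boundary term at the origin removed by continuity. The one difference is that you check directly that reflection terms with $\langle\alpha,g\xi\rangle=0$ vanish on both sides of the system; this lets you skip the paper's reduction to regular directions followed by a density argument, which is a valid and slightly cleaner variant.
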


\begin{proof}
For $\alpha\in R^+$, define the permutation operator
$S_\alpha$ on $\mathbb C^{G}$ by
\[
    (S_\alpha v)_g=v_{\sigma_\alpha g},
    \qquad g\in G,
\]
and put
\[
    A=\sum_{\alpha\in R^+}k(\alpha)(I-S_\alpha),
    \qquad
    \gamma=\sum_{\alpha\in R^+}k(\alpha).
\]
Writing $A=\gamma I-K$, where
$K=\sum_{\alpha\in R^+}k(\alpha)S_\alpha$, we see that
\[
    e^{-sA}=e^{-\gamma s}e^{sK},
    \qquad s\geq0,
\]
has nonnegative entries. Moreover, $A\mathbf1=0$, where $\mathbf1=(1)_{g\in G}$,
and hence $e^{-sA}\mathbf1=\mathbf1$.
Thus the entries of $e^{-sA}$ are nonnegative and each
row sums to one. Consequently, for every $g\in G$,
\[
    |(e^{-sA}v)_g|
    \leq\sum_{h\in G}(e^{-sA})_{g,h}|v_h|
    \leq\|v\|_{\ell^\infty(G)},
\]
which proves
\begin{equation}\label{eq:orbit-semigroup-contraction}
    \|e^{-sA}v\|_{\ell^\infty(G)}
    \leq\|v\|_{\ell^\infty(G)},
    \qquad s\geq0.
\end{equation}

Fix a unit vector $\xi$ such that
$\langle\alpha,\xi\rangle\neq0$ for every $\alpha\in R$.
For $r\geq0$ and $g\in G$, set
\[
    F_g(r)=f(rg(\xi)),
    \qquad
    S_g(r)=T_{g(\xi)}f(rg(\xi)).
\]
The definition of the Dunkl operators gives, for $r>0$,
\[
    S_g(r)
    =F'_g(r)
     +\frac1r\sum_{\alpha\in R^+}k(\alpha)
        \bigl(F_g(r)-F_{\sigma_\alpha \circ g}(r)\bigr).
\]
Thus
\[
    F'(r)+\frac1rAF(r)=S(r).
\]
Since $F(0)=f(0)\mathbf1$ and $A\mathbf1=0$, the vector
$U(r)=F(r)-f(0)\mathbf1$ satisfies
\[
    U'(r)+\frac1rAU(r)=S(r),
    \qquad U(0)=0.
\]
For $r>0$, put $r^A=e^{(\log r)A}$.
Since $U'(r)+r^{-1}AU(r)=S(r)$, we have
\[
    \frac{d}{dr}\bigl(r^AU(r)\bigr)
    =r^A\bigl(U'(r)+r^{-1}AU(r)\bigr)
    =r^AS(r).
\]
Integrating over $[\varepsilon,r]$ and multiplying
by $r^{-A}$ gives
\[
    U(r)
    =e^{-\log(r/\varepsilon)A}U(\varepsilon)
     +\int_\varepsilon^r
        e^{-\log(r/s)A}S(s)\,ds.
\]
By \eqref{eq:orbit-semigroup-contraction} and continuity
at zero, we may let $\varepsilon\to0^+$ to obtain
\[
    U(r)=\int_0^r e^{-\log(r/s)A}S(s)\,ds.
\]
Therefore,
\[
\begin{aligned}
    |f(r\xi)-f(0)|
    &\leq \|U(r)\|_{\ell^\infty(G)}\leq \int_0^r
        \max_{g\in G}|T_{g(\xi)}f(sg(\xi))|\,ds\\
    &=r\int_0^1
        \max_{g\in G}|T_{g(\xi)}f(trg(\xi))|\,dt\leq r\max_{g\in G}\sup_{0\leq t\leq1}
        |T_{g(\xi)}f(trg(\xi))|.
\end{aligned}
\]
Since $f\in C^1$, its Dunkl derivatives are continuous.
The finiteness of $G$ and compactness of $[0,1]$
therefore show that the last expression is continuous
in $\xi$. By density of regular unit vectors, the
estimate holds for every unit vector $\xi$.
Taking $r=\|\mathbf x\|$ and
$\xi=\mathbf x/\|\mathbf x\|$ proves the first inequality
in \eqref{eq:increment-at-origin}.
The second follows from
\[
    |T_{g(\xi)}f(\mathbf y)|
    =\left|\sum_{j=1}^N(g(\xi))_jT_jf(\mathbf y)\right|
    \leq|\nabla_k f(\mathbf y)|,
\]
since $\|g(\xi)\|=1$.
The final assertion follows immediately.
\end{proof}

\begin{corollary}\label{cor:radial-derivative}
Let $\gamma=\sum_{\alpha\in R}\frac{k(\alpha)}{2}$.
For every $f\in C^1(\mathbb R^N;\mathbb C)$,
every unit vector $v\in\mathbb R^N$, and every $r>0$,
\begin{equation}\label{eq:radial-derivative}
\begin{aligned}
    |\partial_v f(rv)|
    &\leq (1+2\gamma)
        \max_{g\in G}\sup_{0\leq t\leq1}
            |T_{g(v)}f(trg(v))|\leq (1+2\gamma)
        \sup_{\mathbf y\in B(0,r)}
            |\nabla_k f(\mathbf y)|.
\end{aligned}
\end{equation}
\end{corollary}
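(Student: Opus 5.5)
Fix a regular unit vector $v$ first; by the continuity argument at the end of the proof of Lemma~\ref{lem:increment-at-origin}, the general case then follows from density, since both sides are continuous in $v$. With $\xi=v$ we reuse the orbit system from that proof: $F_g(r)=f(rg(\xi))$ and $S_g(r)=T_{g(\xi)}f(rg(\xi))$. These satisfy
\[
    F'(r)=S(r)-\frac1rAF(r)=S(r)-\frac1rAU(r),
\]
where $U(r)=F(r)-f(0)\mathbf1$ and we use $A\mathbf1=0$. The identity component of $F'(r)$ is $\partial_vf(rv)$. The estimate therefore reduces to bounding $\|S(r)\|_{\ell^\infty(G)}$ and $r^{-1}\|AU(r)\|_{\ell^\infty(G)}$.

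The first term is bounded directly by $M:=\max_{g}\sup_{0\le t\le1}|T_{g(v)}f(trg(v))|$, since $t=1$ is allowed. For the second term, the proof of Lemma~\ref{lem:increment-at-origin} already gives $\|U(r)\|_{\ell^\infty(G)}\le rM$. In fact it gives this componentwise for every $g$, because $|f(rg(v))-f(0)|$ is controlled by the same orbit maximum and the orbit of $g(v)$ is the orbit of $v$. It remains to bound the $\ell^\infty\to\ell^\infty$ norm of $A=\sum_{\alpha\in R^+}k(\alpha)(I-S_\alpha)$. Each $I-S_\alpha$ has norm at most $2$, so $\|A\|_{\infty\to\infty}\le 2\sum_{\alpha\in R^+}k(\alpha)$. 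Note that $\gamma=\sum_{\alpha\in R}k(\alpha)/2=\sum_{\alpha\in R^+}k(\alpha)$ by $G$-invariance of $k$ and $k(-\alpha)=k(\alpha)$. Hence $r^{-1}\|AU(r)\|\le 2\gamma M$. Combining the two bounds gives $|\partial_vf(rv)|\le(1+2\gamma)M$, which is the first inequality.

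For a sharper viewpoint one can also use the representation $U(r)=\int_0^r e^{-\log(r/s)A}S(s)\,ds$. Then
\[
    AU(r)/r=\frac1r\int_0^r Ae^{-\log(r/s)A}S(s)\,ds,
\]
and bounding $\|Ae^{-\tau A}\|$ requires no more than the crude estimate above, so the simple argument suffices.

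The second inequality follows exactly as in the lemma. Since $\|g(v)\|=1$, we have $|T_{g(v)}f(\mathbf y)|\le|\nabla_kf(\mathbf y)|$, and every point $trg(v)$ with $0\le t\le1$ lies in $\overline{B(0,r)}$. Here the supremum over the closed ball equals the supremum over the open ball $B(0,r)$ by continuity of $\nabla_kf$.

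The only delicate point, which I expect to be the main obstacle, is the passage from regular to arbitrary $v$. For non-regular $v$ the ODE system is still valid for $r>0$, since the Dunkl operator formula is pointwise. Still, it is cleaner to argue by continuity. The left side $\partial_vf(rv)$ is continuous in $v$ because $f\in C^1$. The right side is continuous in $v$ by uniform continuity of the Dunkl derivatives on compacts, together with finiteness of $G$ and compactness of $[0,1]$. Passing to the limit along regular unit vectors converging to $v$ then finishes the proof.
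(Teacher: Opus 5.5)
Your proposal is correct and is essentially the paper's proof. The identity component of $F'(r)=S(r)-r^{-1}AU(r)$ is exactly the definition of $T_vf(rv)$ solved for $\partial_vf(rv)$, and your bound $r^{-1}\|AU(r)\|_{\ell^\infty(G)}\le 2\gamma M$ via $\|U(r)\|_{\ell^\infty(G)}\le rM$ is the paper's estimate $|f(rv)-f(r\sigma_\alpha(v))|\le 2rM$, obtained from Lemma~\ref{lem:increment-at-origin} at the points $rg(v)$; the density reduction to regular $v$ and the second inequality (via $\|g(v)\|=1$ and continuity) also match the paper, and your side remark that the orbit system stays valid for non-regular $v$ is correct, since $\langle\alpha,g(v)\rangle=0$ exactly when $f(rg(v))-f(r\sigma_\alpha g(v))=0$.
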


\begin{proof}
First assume that $v$ is regular, and put
\[
    M=\max_{g\in G}\sup_{0\leq t\leq1}
        |T_{g(v)}f(trg(v))|.
\]
Applying Lemma~\ref{lem:increment-at-origin}
at $rg(v)$ and using the group property, we obtain
\[
    |f(rg(v))-f(0)|\leq rM,
    \qquad g\in G.
\]
Hence, for every $\alpha\in R$,
\[
    |f(rv)-f(r\sigma_\alpha(v))|\leq2rM.
\]
By the definition of $T_v$,
\[
    \partial_v f(rv)
    =T_vf(rv)
     -\frac1r\sum_{\alpha\in R^+}k(\alpha)
        \bigl(f(rv)-f(r\sigma_\alpha(v))\bigr).
\]
Consequently,
\[
    |\partial_v f(rv)|
    \leq M+2\gamma M=(1+2\gamma)M.
\]
The second inequality in \eqref{eq:radial-derivative}
follows from
$|T_{g(v)}f(\mathbf y)|\leq|\nabla_k f(\mathbf y)|$
and continuity, which allows the closed ball
to be replaced by $B(0,r)$.
Finally, continuity of the Dunkl derivatives,
finiteness of $G$, and compactness of $[0,1]$
extend the estimate to every unit vector $v$
by density of regular unit vectors.
\end{proof}

In dimension one, the estimate of Corollary~\ref{cor:radial-derivative}
can be sharpened to the optimal constant.
We also obtain an explicit inverse of the operator
relating the ordinary and Dunkl derivatives.

\begin{proposition}\label{prop:rank-one}
Let $k\geq0$ and $a>0$. For every
$f\in C^1([-a,a];\mathbb C)$,
\begin{equation}\label{eq:rank-one-estimate}
    \|f'\|_{L^\infty([-a,a])}
    \leq\frac{1+4k}{1+2k}
        \|Tf\|_{L^\infty([-a,a])},
\end{equation}
where
\[
    Tf(x)=f'(x)+k\frac{f(x)-f(-x)}{x}
\]
is understood by continuity at $x=0$.
The constant in \eqref{eq:rank-one-estimate} is optimal.

More precisely, the operator
\[
    U_kh(x)=h(x)+k\int_{-1}^{1}h(sx)\,ds
\]
is invertible on $C([-a,a];\mathbb C)$, and
\begin{equation}\label{eq:rank-one-inverse}
    U_k^{-1}h(x)
    =h(x)-k\int_{-1}^{1}|s|^{2k}h(sx)\,ds.
\end{equation}
Moreover,
\begin{equation}\label{eq:rank-one-inverse-norm}
    \|U_k^{-1}\|_{C([-a,a])\to C([-a,a])}
    =\frac{1+4k}{1+2k}.
\end{equation}
\end{proposition}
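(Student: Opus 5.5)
The plan is to reduce the estimate to the operator identity $Tf=U_kf'$, invert $U_k$ explicitly, and compute the norm of the inverse.

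\emph{Reduction to $U_k$.} For $f\in C^1([-a,a];\mathbb C)$ and $x\neq0$ we have
\[
    f(x)-f(-x)=\int_{-x}^{x}f'(y)\,dy=x\int_{-1}^{1}f'(sx)\,ds .
\]
Hence
\[
    Tf(x)=f'(x)+k\int_{-1}^1f'(sx)\,ds=U_k(f')(x).
\]
Both sides are continuous, so this identity also holds at $x=0$, consistent with the convention for $Tf(0)$. Once invertibility is known, $f'=U_k^{-1}(Tf)$, and \eqref{eq:rank-one-estimate} follows from the operator norm bound in \eqref{eq:rank-one-inverse-norm}.

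\emph{The inverse \eqref{eq:rank-one-inverse}.} Let $V$ denote the operator on the right of \eqref{eq:rank-one-inverse}. Both $U_k$ and $V$ are bounded on $C([-a,a])$ and commute with the reflection $x\mapsto-x$, so I will treat odd and even functions separately.

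On odd $h$, both integrals vanish, since the integrands $h(sx)$ and $|s|^{2k}h(sx)$ are odd in $s$. Thus $U_k$ and $V$ both act as the identity there.

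On even $h$, the operators reduce to
\[
    U_kh(x)=h(x)+2k\int_0^1h(sx)\,ds,
    \qquad
    Vh(x)=h(x)-2k\int_0^1s^{2k}h(sx)\,ds .
\]
Both are diagonal on the monomials $x^m$ with $m$ even:
\begin{itemize}
\item $U_k x^m=\bigl(1+\tfrac{2k}{m+1}\bigr)x^m=\tfrac{m+1+2k}{m+1}\,x^m$;
\item $V x^m=\bigl(1-\tfrac{2k}{m+2k+1}\bigr)x^m=\tfrac{m+1}{m+1+2k}\,x^m$.
\end{itemize}
The two multipliers are reciprocal, so $U_kV=VU_k=I$ on even polynomials. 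Even polynomials are dense in the even part of $C([-a,a])$ by Weierstrass, and all operators involved are bounded. Hence $U_kV=VU_k=I$ on all of $C([-a,a])$. I expect this verification to be the main step. The monomial (Mellin multiplier) computation is what I would try first, since it avoids manipulating iterated integrals directly.

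\emph{The norm \eqref{eq:rank-one-inverse-norm} and optimality.} For the upper bound,
\[
    \|Vh\|_\infty\leq\Bigl(1+k\int_{-1}^1|s|^{2k}\,ds\Bigr)\|h\|_\infty
    =\Bigl(1+\frac{2k}{2k+1}\Bigr)\|h\|_\infty
    =\frac{1+4k}{1+2k}\|h\|_\infty .
\]
For the lower bound, take $h$ continuous with $|h|\leq1$, $h(a)=1$, and $h=-1$ on $[-a+\delta,a-\delta]$. Then
\[
    Vh(a)\geq\frac{1+4k}{1+2k}-O(\delta),
\]
and letting $\delta\to0$ shows the bound is attained in the limit, which gives \eqref{eq:rank-one-inverse-norm}.

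Optimality of \eqref{eq:rank-one-estimate} then follows. Given any $h\in C([-a,a])$, set $f(x)=\int_0^x Vh$. Then $f\in C^1$, $f'=Vh$, and $Tf=U_kVh=h$. Consequently the supremum of $\|f'\|_\infty/\|Tf\|_\infty$ over $f$ equals $\|U_k^{-1}\|$, so the constant $\frac{1+4k}{1+2k}$ cannot be improved.
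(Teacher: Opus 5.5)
Your argument is correct, and its overall structure coincides with the paper's: the identity $Tf=U_k(f')$, the explicit inverse, the norm bound, and optimality via $f(x)=\int_0^x U_k^{-1}h$. The genuine difference is in how you verify the inverse.

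\textbf{Verifying the inverse.} The paper absorbs the even part into the operators $H_\beta h(x)=\int_0^1 s^\beta h_{\mathrm e}(sx)\,ds$. With $b=2k$ this gives $U_k=I+bH_0$ and $U_k^{-1}=I-bH_b$. It then proves by Fubini the relation $H_0H_b=H_bH_0=\frac1b(H_0-H_b)$. This yields $(I+bH_0)(I-bH_b)=(I-bH_b)(I+bH_0)=I$ directly on all of $C([-a,a])$. You instead diagonalize on monomials and extend by Weierstrass density and boundedness.

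These are the same computation seen from two sides. Since $H_0$ and $H_b$ act on $x^m$ ($m$ even) by $\frac1{m+1}$ and $\frac1{m+1+b}$, the paper's relation is exactly the partial-fraction identity $\frac{1}{(m+1)(m+1+b)}=\frac1b\bigl(\frac1{m+1}-\frac1{m+1+b}\bigr)$ behind your reciprocal multipliers.

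Your route explains where the kernel $|s|^{2k}$ comes from. The identity $\frac{m+1}{m+1+2k}=1-2k\int_0^1 s^{m+2k}\,ds$ lets one derive the inverse rather than guess and check it. The paper's route is a closed-form verification with no approximation step. Your odd/even splitting is optional: both operators fix odd monomials, so density of all polynomials already suffices.

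\textbf{Sharpness.} The paper computes the norm of $h\mapsto(U_k^{-1}h)(a)$ as the total variation of the mutually singular measure $\delta_a-\frac{k}{a^{2k+1}}|y|^{2k}\,dy$. Your test function, equal to $1$ at $a$ and to $-1$ away from the endpoints, is a concrete near-extremizer for that same total variation.
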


\begin{proof}
The case $k=0$ is immediate. Assume that $k>0$ and
put $b=2k$. For $\beta\geq0$, define
\[
    H_\beta h(x)
    =\int_0^1s^\beta h_{\mathrm e}(sx)\,ds,
    \qquad
    h_{\mathrm e}(x)=\frac{h(x)+h(-x)}2.
\]
These are bounded operators on $C([-a,a];\mathbb C)$.
Fubini's theorem gives
\[
\begin{aligned}
    H_0H_bh(x)=H_bH_0h(x)=\int_0^1\int_0^1
        s^b h_{\mathrm e}(stx)\,dt\,ds=\int_0^1h_{\mathrm e}(rx)
        \int_r^1s^{b-1}\,ds\,dr=\frac1b(H_0-H_b)h(x).
\end{aligned}
\]
Consequently,
\[
    (I+bH_0)(I-bH_b)
    =(I-bH_b)(I+bH_0)=I.
\]
Since $U_k=I+bH_0$, this proves
\eqref{eq:rank-one-inverse}. It also yields
\[
    \|U_k^{-1}h\|_\infty
    \leq
    \left(1+\frac{b}{b+1}\right)\|h\|_\infty
    =\frac{1+4k}{1+2k}\|h\|_\infty.
\]

To prove sharpness, consider the functional
\[
    h\longmapsto(U_k^{-1}h)(a)
    =h(a)-\frac{k}{a^{2k+1}}
        \int_{-a}^{a}|y|^{2k}h(y)\,dy.
\]
Its representing measure on $[-a,a]$ is
\[
    \delta_a-\frac{k}{a^{2k+1}}|y|^{2k}\,dy.
\]
The two positive measures on the right are mutually
singular. Hence the norm of this functional equals
\[
    1+\frac{k}{a^{2k+1}}
        \int_{-a}^{a}|y|^{2k}\,dy
    =1+\frac{2k}{2k+1}.
\]
This proves \eqref{eq:rank-one-inverse-norm}.

Finally, the fundamental theorem of calculus gives
\[
    Tf=U_k(f').
\]
Thus \eqref{eq:rank-one-estimate} follows from
\eqref{eq:rank-one-inverse-norm}.
Conversely, every continuous function on $[-a,a]$
is the derivative of a function in $C^1([-a,a])$.
Since $U_k$ is invertible, any constant valid in
\eqref{eq:rank-one-estimate} must therefore be at least
$\|U_k^{-1}\|$.
\end{proof}

\section{Harmonic estimates and vertical translates}\label{sec:pointwise}
\subsection{Pointwise estimates and logarithmic growth}

\begin{lemma}\label{lem:pointwise-logarithmic}
Let $u\in C^\infty(\mathbb R_+^{1+N};\mathbb C)$ satisfy
$\mathcal Lu=0$ and $\|u\|_{**}<\infty$. Then
\begin{equation}\label{eq:pointwise-Dunkl-gradient}
    |\partial_tu(t,\mathbf x)|
    +|\nabla_k u(t,\mathbf x)|
    \leq \frac{C}{t}\|u\|_{**},
    \qquad (t,\mathbf x)\in\mathbb R_+^{1+N}.
\end{equation}
Moreover,
\begin{equation}\label{eq:logarithmic-increment}
    |u(t,\mathbf x)-u(t,0)|
    \leq C\|u\|_{**}
    \log\left(1+\frac{\|\mathbf x\|}{t}\right).
\end{equation}
The constant $C$ depends only on the root system,
the multiplicity function, and the dimension.
\end{lemma}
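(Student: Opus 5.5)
The plan is to derive the pointwise gradient bound \eqref{eq:pointwise-Dunkl-gradient} from the Carleson condition through the Gallardo--Rejeb mean-value formula, and then to integrate it along rays using Lemma~\ref{lem:increment-at-origin}.

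\textbf{Pointwise gradient bound.} Since the $T_j$ ($j=0,\dots,N$) commute with $\mathcal L$ on $\mathbb R^{1+N}$, each $v_j=T_ju$ is again $\mathcal L$-harmonic in $\mathbb R_+^{1+N}$. This is clear for $T_0=\partial_t$, and for $j\geq1$ it follows from commutativity of the Dunkl operators in the extended root system. The domain $\mathbb R_+^{1+N}$ is $\mathcal G$-invariant, because $\mathcal G$ acts only on the $\mathbf x$ variable.

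Fix $(t,\mathbf x)$ and set $a=t/6$. Theorem~\ref{teo:GR} in $\mathbb R^{1+N}$, applied with $\rho=t/2$, gives $v_j(t,\mathbf x)=M_B^a v_j(t,\mathbf x)$. The kernel $h_k(a,(t,\mathbf x),\cdot)$ is nonnegative, has total mass $\mu(B(0,a))$, and is supported in $\bigcup_{g}\overline{B((t,g\mathbf x),a)}$. Hence Cauchy--Schwarz yields
\[
|v_j(t,\mathbf x)|^2\leq \frac{1}{\mu(B(0,a))}\int h_k\,|v_j|^2\,d\mu .
\]
The difficulty is that $h_k$ need not be bounded by a constant times $1$, so this cannot be compared directly with $\int_{\mathcal O(B)}|v_j|^2d\mu$. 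I would first try the bound $h_k(a,\mathbf z,\cdot)\le \mu(B(0,a))/\mu(B(\mathbf z,a))$ times the indicator of the orbit ball; it follows from the Rösler representation of $h_k$ as a Dunkl translate of the indicator of a ball (\S2.4), together with known $L^\infty$ bounds for translates of radial functions. This is the main obstacle. If that bound is not available, I would instead iterate: average the mean-value identity over $a\in[t/12,t/6]$ and use the structure of $h_k$ as a translate of $\chi_{B(0,a)}$ to dominate the averaged kernel by $C\,\mu(B((t,\mathbf x),t/2))^{-1}\chi_{\mathcal O(B((t,\mathbf x),t/2))}$.

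Granting such a bound, and using $\mu(B((t,\mathbf x),t/2))\asymp t\,w(B(\mathbf x,t))$ together with $s\asymp t$ on the support, we get
\[
|v_j(t,\mathbf x)|^2\lesssim \sum_{g\in G}\frac{1}{t^2 w(B(g\mathbf x,t))}\int_{t/2}^{3t/2}\int_{B(g\mathbf x,t)} s|\widetilde\nabla_ku|^2\,dw\,ds .
\]
The $G$-invariance of $w$ gives $w(B(g\mathbf x,t))=w(B(\mathbf x,t))$. Each integral is at most $w(B(g\mathbf x,2t))\|u\|_{**}^2$, and by doubling $w(B(g\mathbf x,2t))\lesssim w(B(g\mathbf x,t))$. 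This gives \eqref{eq:pointwise-Dunkl-gradient}.

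\textbf{Logarithmic increment.} Split into two cases.

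If $\|\mathbf x\|\le t$, apply Lemma~\ref{lem:increment-at-origin} to $u(t,\cdot)$. The Dunkl gradient in $\mathbf x$ of $u(t,\cdot)$ coincides with $(T_1u,\dots,T_Nu)$ at height $t$, since the root system has no $t$-component. This gives $|u(t,\mathbf x)-u(t,0)|\le C\|u\|_{**}\|\mathbf x\|/t\lesssim \log(1+\|\mathbf x\|/t)\,\|u\|_{**}$.

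If $\|\mathbf x\|>t$, set $s=\|\mathbf x\|$ and write
\[
u(t,\mathbf x)-u(t,0)=[u(t,\mathbf x)-u(s,\mathbf x)]+[u(s,\mathbf x)-u(s,0)]+[u(s,0)-u(t,0)].
\]
The two vertical terms are bounded by $\int_t^s C\|u\|_{**}\,d\tau/\tau=C\|u\|_{**}\log(s/t)$. The middle term is bounded by $C\|u\|_{**}$, again by Lemma~\ref{lem:increment-at-origin} with the bound $C/s$. Summing the three pieces gives \eqref{eq:logarithmic-increment}.
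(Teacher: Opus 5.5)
Your proposal is correct and follows the paper's proof. The kernel bound you single out as the main obstacle, $h_k(a,\mathbf z,\cdot)\lesssim \mu(B(0,a))/\mu(B(\mathbf z,a))$ on the orbit of the ball, is exactly what the paper obtains by applying \cite[Corollary~3.2]{ADH-JFAA} in $\mathbb R^{1+N}$ to the radial function $\mathbf 1_{B(0,1)}/\mu(B(0,1))$ with $\kappa=0$, so your fallback averaging argument is not needed. For the logarithmic increment, the paper uses a single path through height $t+\|\mathbf x\|$ together with $b/(1+b)\leq\log(1+b)$ instead of your case split, which is an inessential difference.
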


\begin{proof}
Fix $(t,\mathbf x)\in\mathbb R_+^{1+N}$.
Since the Dunkl operators commute,
$v_j=T_ju$ is $\mathcal L$-harmonic for
$j=0,\ldots,N$.

We regard $R$ as a root system in $\mathbb R^{1+N}$,
with $G$ acting trivially on the first coordinate.
Write $\mathbf{z}=(t,\mathbf x)$, $\zeta=(s,\mathbf y)$, and
$d\mu(\zeta)=ds\,dw(\mathbf y)$, and denote Euclidean
balls in $\mathbb R^{1+N}$ by $B(z,r)$.
The corresponding homogeneous dimension is
$\mathbf N+1$.

Let $\widetilde\tau$ denote the Dunkl translation
in the extended space. For
$\mathbf z=(t,\mathbf x)$ and
$\boldsymbol\zeta=(s,\mathbf y)$, put
\[
    K_r(\mathbf z,\boldsymbol\zeta)
    =
    \frac{
        \widetilde\tau_{\mathbf z}
        \mathbf1_{B(0,r)}(-\boldsymbol\zeta)}
        {\mu(B(0,r))}.
\]
The positivity, normalization, and support properties
of the harmonic kernel
\cite[Proposition~3.1]{GR} give
\begin{equation}\label{eq:mean-kernel-properties}
    K_r(\mathbf z,\boldsymbol\zeta)\geq0,
    \qquad
    \int_{\mathbb R^{1+N}}
        K_r(\mathbf z,\boldsymbol\zeta)
        \,d\mu(\boldsymbol\zeta)=1,
    \qquad
    \operatorname{supp}K_r(\mathbf z,\cdot)
    \subseteq
    \bigcup_{g\in G}
        \overline{B((t,g\mathbf x),r)}.
\end{equation}

To estimate $K_r$, apply
\cite[Corollary~3.2]{ADH-JFAA} in the extended space
to the radial function
\[
    \phi(\boldsymbol\zeta)
    =
    \frac{\mathbf1_{B(0,1)}(\boldsymbol\zeta)}
         {\mu(B(0,1))}.
\]
Indeed, $\phi$ has arbitrarily rapid decay.
Since $d\mu=ds\,dw$ has homogeneous dimension
$\mathbf N+1$, we have
\[
    \mu(B(0,r))=r^{\mathbf N+1}\mu(B(0,1)),
\]
and hence
\[
    r^{-(\mathbf N+1)}
        \phi(\boldsymbol\zeta/r)
    =
    \frac{\mathbf1_{B(0,r)}(\boldsymbol\zeta)}
         {\mu(B(0,r))}.
\]
Taking the decay parameter $\kappa=0$ in that
corollary, we obtain
\begin{equation}\label{eq:mean-kernel-size}
    K_r(\mathbf z,\boldsymbol\zeta)
    \lesssim \frac1{\mu(B(\mathbf z,r))}
    \asymp \frac1{r\,w(B(\mathbf x,r))}.
\end{equation}
Here the volume comparison follows from
\[
    (t-r/2,t+r/2)\times B(\mathbf x,r/2)
    \subset B(\mathbf z,r)
    \subset (t-r,t+r)\times B(\mathbf x,r)
\]
and the doubling property of $dw$.

Now fix $t>0$ and set $r=t/4$.
Since $\mathcal L v_j=0$, the mean-value formula
\eqref{eq:GR-mean-value}, applied in the extended
space, gives
\[
    v_j(\mathbf z)
    =
    \int_{\mathbb R_+^{1+N}}
        K_r(\mathbf z,\boldsymbol\zeta)
        v_j(\boldsymbol\zeta)\,d\mu(\boldsymbol\zeta).
\]
By the Cauchy--Schwarz inequality and
\eqref{eq:mean-kernel-properties},
\[
    |v_j(\mathbf z)|^2
    \leq
    \int_{\mathbb R_+^{1+N}}
        K_r(\mathbf z,\boldsymbol\zeta)
        |v_j(\boldsymbol\zeta)|^2
        \,d\mu(\boldsymbol\zeta).
\]
Using \eqref{eq:mean-kernel-size}, the support
inclusion in \eqref{eq:mean-kernel-properties},
and \eqref{eq:volume-growth}, we conclude that
\begin{equation}\label{eq:local-mean-gradient}
\begin{aligned}
    |v_j(t,\mathbf x)|^2
    &\lesssim
    \frac1{t\,w(B(\mathbf x,t))}
    \sum_{g\in G}
    \int_{t/2}^{3t/2}
    \int_{B(g\mathbf x,t/2)}
        |v_j(s,\mathbf y)|^2
        \,dw(\mathbf y)\,ds.
\end{aligned}
\end{equation}

Since $s\geq t/2$ in \eqref{eq:local-mean-gradient},
we obtain
\[
\begin{aligned}
    |v_j(t,\mathbf x)|^2
    &\leq
    \frac{C}{t^2w(B(\mathbf x,t))}
    \sum_{g\in G}
    \int_0^{2t}\int_{B(g(\mathbf x),2t)}
    s\,|\widetilde\nabla_k u(s,\mathbf y)|^2
    \,dw(\mathbf y)\,ds\\&\leq
    \frac{C\|u\|_{**}^2}{t^2w(B(\mathbf x,t))}
    \sum_{g\in G}w(B(g(\mathbf x),2t))
    \leq \frac{C}{t^2}\|u\|_{**}^2,
\end{aligned}
\]
where the last inequality follows from the
$G$-invariance and doubling property of $dw$.
Summing over $j=0,\ldots,N$ proves
\eqref{eq:pointwise-Dunkl-gradient}.

To prove \eqref{eq:logarithmic-increment}, put $a=t+\|\mathbf x\|$.
Lemma~\ref{lem:increment-at-origin}, applied to
$u(a,\cdot)$, and \eqref{eq:pointwise-Dunkl-gradient}
give
\[
    |u(a,\mathbf x)-u(a,0)|
    \leq
    \|\mathbf x\|\sup_{\|\mathbf y\|\leq\|\mathbf x\|}
        |\nabla_k u(a,\mathbf y)|
    \leq C\|u\|_{**}\frac{\|\mathbf x\|}{a}.
\]
Integrating the estimate for $\partial_tu$ along
the two vertical segments, we obtain
\[
\begin{aligned}
    |u(t,\mathbf x)-u(t,0)|
    &\leq
    \int_t^a|\partial_su(s,\mathbf x)|\,ds
    +|u(a,\mathbf x)-u(a,0)|
    +\int_t^a|\partial_su(s,0)|\,ds\\
    &\leq
    C\|u\|_{**}
    \left(
        2\log\frac at+\frac{\|\mathbf x\|}{a}
    \right)\leq
    C\|u\|_{**}
    \log\left(1+\frac{\|\mathbf x\|}{t}\right),
\end{aligned}
\]
using $b/(1+b)\leq\log(1+b)$ for $b\geq0$.
\end{proof}

\begin{corollary}\label{cor:logarithmic-growth-of-traces}
Under the assumptions of
Lemma~\ref{lem:pointwise-logarithmic}, for every
$\varepsilon>0$ the function
$f_\varepsilon(\mathbf x)=u(\varepsilon,\mathbf x)$
satisfies
\[
    |f_\varepsilon(\mathbf x)|
    \leq |u(\varepsilon,0)|
    +C\|u\|_{**}
        \log\left(1+\frac{\|\mathbf x\|}{\varepsilon}\right).
\]
In particular, there exists a constant
$C_{u,\varepsilon}<\infty$ such that
\[
    |f_\varepsilon(\mathbf x)|
    \leq C_{u,\varepsilon}\log(2+\|\mathbf x\|),
    \qquad \mathbf x\in\mathbb R^N.
\]
\end{corollary}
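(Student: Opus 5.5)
The first bound follows directly from \eqref{eq:logarithmic-increment} in Lemma~\ref{lem:pointwise-logarithmic}, taken with $t=\varepsilon$. Writing $f_\varepsilon(\mathbf x)=u(\varepsilon,0)+\bigl(u(\varepsilon,\mathbf x)-u(\varepsilon,0)\bigr)$ and applying the triangle inequality gives
\[
    |f_\varepsilon(\mathbf x)|\leq|u(\varepsilon,0)|+C\|u\|_{**}\log\left(1+\frac{\|\mathbf x\|}{\varepsilon}\right),
\]
and $C$ is the constant of that lemma, which depends only on $R$, $k$ and $N$. Note that $u(\varepsilon,0)$ is a finite number because $u$ is smooth on $\mathbb R_+^{1+N}$.

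For the second bound, I would compare $\log(1+\|\mathbf x\|/\varepsilon)$ with $\log(2+\|\mathbf x\|)$, and this needs only an elementary inequality. Put $\lambda=\max(1,1/\varepsilon)$. Then
\[
    1+\frac{\|\mathbf x\|}{\varepsilon}\leq\lambda(1+\|\mathbf x\|)\leq\lambda(2+\|\mathbf x\|),
\]
so
\[
    \log\left(1+\frac{\|\mathbf x\|}{\varepsilon}\right)\leq\log\lambda+\log(2+\|\mathbf x\|)\leq\left(\frac{\log\lambda}{\log2}+1\right)\log(2+\|\mathbf x\|).
\]
The last step uses $\log(2+\|\mathbf x\|)\geq\log2>0$. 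For the same reason, the constant term satisfies $|u(\varepsilon,0)|\leq\frac{|u(\varepsilon,0)|}{\log 2}\log(2+\|\mathbf x\|)$.

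Adding the two estimates, we may take
\[
    C_{u,\varepsilon}=\frac{|u(\varepsilon,0)|}{\log2}+C\|u\|_{**}\left(1+\frac{\log\max(1,1/\varepsilon)}{\log2}\right).
\]
This is finite, since $\|u\|_{**}<\infty$ by assumption. There is no real obstacle here: all the analytic work is already contained in \eqref{eq:logarithmic-increment}, and the only point to watch is using the lower bound $\log(2+\|\mathbf x\|)\geq\log 2$ to absorb the additive constants into the multiplicative one.
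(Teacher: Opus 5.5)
Your proof is correct and matches the paper's, which likewise obtains the bound directly from \eqref{eq:logarithmic-increment} with $t=\varepsilon$. Your explicit comparison of $\log(1+\|\mathbf x\|/\varepsilon)$ with $\log(2+\|\mathbf x\|)$, using $\lambda=\max(1,1/\varepsilon)$, simply writes out the elementary step the paper leaves implicit.
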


\begin{proof}
This follows directly from
\eqref{eq:logarithmic-increment} with $t=\varepsilon$.
\end{proof}

\subsection{Poisson representation of vertical translates}

\begin{proposition}\label{prop:vertical-translates}
Let $u\in C^\infty(\mathbb R_+^{1+N};\mathbb C)$ satisfy
$\mathcal Lu=0$ and $\|u\|_{**}<\infty$. For
$\varepsilon>0$, set
\[
    f_\varepsilon(\mathbf x)=u(\varepsilon,\mathbf x).
\]
Then, for every $t>0$ and $\mathbf x\in\mathbb R^N$,
the integral
\[
    P_tf_\varepsilon(\mathbf x)
    =\int_{\mathbb R^N}
        p_t(\mathbf x,\mathbf y)f_\varepsilon(\mathbf y)
        \,dw(\mathbf y)
\]
is absolutely convergent and
\begin{equation}\label{eq:vertical-translate-representation}
    u(t+\varepsilon,\mathbf x)
    =P_tf_\varepsilon(\mathbf x).
\end{equation}
\end{proposition}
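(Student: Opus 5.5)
Absolute convergence comes first. By Corollary~\ref{cor:logarithmic-growth-of-traces}, $|f_\varepsilon(\mathbf y)|\leq C_{u,\varepsilon}\log(2+\|\mathbf y\|)$, so $f_\varepsilon\in L^1(\rho\,dw)$. The discussion in the subsection on integrability of Poisson extensions then shows that $P_tf_\varepsilon(\mathbf x)$ converges absolutely and defines a smooth $\mathcal L$-harmonic function $V(t,\mathbf x)$. Moreover $V$ extends continuously to $t=0$ with boundary value $f_\varepsilon$; this uses the continuity of $f_\varepsilon$ together with \eqref{eq:Poisson-size}, by the standard approximate-identity splitting with a logarithmic tail. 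Put $U(t,\mathbf x)=u(t+\varepsilon,\mathbf x)$. It is $\mathcal L$-harmonic on $\mathbb R_+^{1+N}$ and smooth up to $t=0$. The difference $W=U-V$ is therefore $\mathcal L$-harmonic, continuous on $[0,\infty)\times\mathbb R^N$, and vanishes at $t=0$. The goal is $W\equiv0$.

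Since $W$ need not be bounded, Theorem~\ref{thm:bounded-Poisson-representation} cannot be applied to $W$ directly. Instead I would use its derivatives. Set $W_j=T_jW$ for $j=0,\dots,N$. By Lemma~\ref{lem:pointwise-logarithmic}, $|\widetilde\nabla_kU(t,\mathbf x)|\lesssim\|u\|_{**}/(t+\varepsilon)\leq\|u\|_{**}/\varepsilon$, so these are bounded. For $V$, \eqref{eq:Poisson-differentiation} and \eqref{eq:Poisson-Dunkl-symmetry} allow moving the derivatives onto $f_\varepsilon$, since $\partial_tp_t$ is also a $t$-derivative. Concretely, $T_jV(t,\cdot)=P_t(T_jf_\varepsilon)$ and $\partial_tV(t,\cdot)=-P_t\sqrt{-\Delta_k}\,f_\varepsilon$; the latter is awkward. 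A cleaner route is to handle the $\mathbf x$-derivatives first. Integrating by parts \eqref{eq:Dunkl-integration-by-parts} against the kernel, with cutoffs justified by the decay of $T_{j,\mathbf y}p_t$ from \eqref{eq:Poisson-Dunkl-derivative} and the logarithmic growth of $f_\varepsilon$, gives $T_jV(t,\cdot)=P_t(T_jf_\varepsilon)$ for $j\geq1$. Since $T_jf_\varepsilon=(T_ju)(\varepsilon,\cdot)$ is bounded by $C\|u\|_{**}/\varepsilon$, $T_jV$ is bounded, continuous up to the boundary, with boundary value $T_jf_\varepsilon$. Hence $T_jW$ for $j\geq1$ is a bounded $\mathcal L$-harmonic function, continuous up to $t=0$ with zero boundary value. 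By \eqref{eq:bounded-harmonic-representation}, $T_jW\equiv0$.

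By Lemma~\ref{lem:increment-at-origin}, applied to each slice $W(t,\cdot)$, we get $W(t,\mathbf x)=c(t)$, independent of $\mathbf x$. Then $\mathcal LW=c''(t)=0$, so $c(t)=\alpha+\beta t$, and $c(0)=0$ gives $c(t)=\beta t$. To show $\beta=0$, evaluate at $\mathbf x=0$, where \eqref{eq:Poisson-at-origin} gives the explicit kernel. We have $\partial_tU(t,0)=O(1/(t+\varepsilon))\to0$ as $t\to\infty$. Also $\partial_tV(t,0)=\int\partial_tp_t(0,\mathbf y)f_\varepsilon(\mathbf y)\,dw(\mathbf y)$, with $|\partial_tp_t(0,\mathbf y)|\lesssim t^{-1}p_t(0,\mathbf y)$ by \eqref{eq:Poisson-ordinary-derivatives}. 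Because $f_\varepsilon$ has logarithmic growth, $\int p_t(0,\mathbf y)|f_\varepsilon(\mathbf y)|\,dw(\mathbf y)\lesssim\log(2+t)$, so $\partial_tV(t,0)=O(t^{-1}\log(2+t))\to0$. Thus $\beta=\lim_{t\to\infty}\partial_tW(t,0)=0$ and $W\equiv0$, which proves \eqref{eq:vertical-translate-representation}.

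The main obstacle is the justification that $T_jV=P_t(T_jf_\varepsilon)$, together with continuity of $T_jV$ up to the boundary. This requires an integration by parts with truncation $\chi_R f_\varepsilon$, where $\chi_R$ is a $G$-invariant radial cutoff, so that \eqref{eq:Dunkl-Leibniz-invariant} applies. The error term $\int T_{j,\mathbf y}p_t\,(\partial_j\chi_R)f_\varepsilon\,dw$ must tend to zero. By \eqref{eq:Poisson-Dunkl-derivative} it is bounded by $R^{-1}\log R\cdot w(B(0,R))\,/\,\bigl(w(B(\mathbf x,R))R\bigr)$, up to constants, which does tend to zero.
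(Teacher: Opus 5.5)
Your outline matches the paper's proof. You use cutoff integration by parts to get $T_jP_tf_\varepsilon=P_t(T_jf_\varepsilon)$, Theorem~\ref{thm:bounded-Poisson-representation} for the bounded Dunkl derivatives, and Lemma~\ref{lem:increment-at-origin} to remove the $\mathbf x$-dependence; you then kill the affine remainder at the origin. Your $\beta=0$ argument via $\partial_tW(t,0)\to0$ is a harmless variant of the paper's growth comparison.

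One step, however, does not go through as you justify it. You apply \eqref{eq:bounded-harmonic-representation} to $T_jW$. This needs $T_jV=P_t(T_jf_\varepsilon)$ to extend continuously to all of $[0,\infty)\times\mathbb R^N$ with boundary value $T_jf_\varepsilon$. Your only justification for such boundary continuity is an approximate-identity splitting with \eqref{eq:Poisson-size}. That bound depends on $\mathbf y$ only through $d(\mathbf x,\mathbf y)$, so it is a $G$-invariant function of $\mathbf y$. Near a point $\mathbf x_0$ with $g\mathbf x_0\neq\mathbf x_0$, it bounds the mass of $p_t(\mathbf x,\cdot)$ on $gB(\mathbf x_0,\delta)$ by the same non-small quantity as on $B(\mathbf x_0,\delta)$. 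On $gB(\mathbf x_0,\delta)$, however, $h(\mathbf y)-h(\mathbf x_0)\approx h(g\mathbf x_0)-h(\mathbf x_0)$ need not be small. The continuity claim is true, but it needs an extra input that you do not supply, e.g.\ a sharper kernel bound with decay in $\|\mathbf x-\mathbf y\|/t$, or strong continuity of the semigroup on $C_0(\mathbb R^N)$.

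The fix, which is exactly the paper's choice, is to apply Theorem~\ref{thm:bounded-Poisson-representation} to $T_jU(t,\mathbf x)=T_ju(t+\varepsilon,\mathbf x)$ instead. This function is bounded by $C\|u\|_{**}/\varepsilon$, $\mathcal L$-harmonic, and continuous up to $t=0$. Hence $T_jU(t,\cdot)=P_t(T_jf_\varepsilon)=T_jV(t,\cdot)$ directly, and no boundary behaviour of $P_t$ is needed.

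Likewise, for $V$ itself you only use the boundary limit at $\mathbf x=0$, to get $c(0)=0$. There your splitting is valid, since $d(0,\mathbf y)=\|\mathbf y\|$ and \eqref{eq:Poisson-at-origin} is explicit. The global continuity claim for $V$ should be dropped.

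Finally, a small slip: the cutoff error term is $\int p_t(\mathbf x,\mathbf y)\,\partial_j\chi_R(\mathbf y)f_\varepsilon(\mathbf y)\,dw(\mathbf y)$, not an integral against $T_{j,\mathbf y}p_t$. It is bounded by $CR^{-1}\int p_t(\mathbf x,\mathbf y)|f_\varepsilon(\mathbf y)|\,dw(\mathbf y)$, so it still tends to zero.
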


\begin{proof}
Fix $\varepsilon>0$ and write $f=f_\varepsilon$.
By Corollary~\ref{cor:logarithmic-growth-of-traces},
\begin{equation}\label{eq:trace-logarithmic-bound}
    |f(\mathbf y)|
    \leq C_{u,\varepsilon}\log(2+\|\mathbf y\|).
\end{equation}
The kernel estimates
\eqref{eq:Poisson-size}--\eqref{eq:Poisson-ordinary-derivatives}
therefore imply that $P_tf$ is absolutely convergent
and defines a smooth $\mathcal L$-harmonic function. Indeed, \eqref{eq:volume-estimate},
\eqref{eq:Poisson-size}, and
\eqref{eq:Poisson-ordinary-derivatives} imply that,
for every compact set $K\subset\mathbb R_+^{1+N}$,
\[
    \sup_{(t,\mathbf x)\in K}
    |\partial_t^m\partial_{\mathbf x}^{\beta}
        p_t(\mathbf x,\mathbf y)|
    \leq C_{K,m,\beta}\rho(\mathbf y).
\]
Since $f\in L^1(\rho\,dw)$, dominated convergence
justifies ordinary differentiation under the integral.
The same holds for Dunkl derivatives: their difference
quotients are averages of ordinary directional
derivatives along segments joining $\mathbf x$
to $\sigma_\alpha(\mathbf x)$.
Thus \eqref{eq:Poisson-differentiation} holds, and
the identity
$(\partial_t^2+\Delta_{k,\mathbf x})
p_t(\mathbf x,\mathbf y)=0$
implies that $P_tf$ is $\mathcal L$-harmonic.
Moreover, by \eqref{eq:Poisson-differentiation},
\begin{equation}\label{eq:derivative-under-Poisson-integral}
    T_jP_tf(\mathbf x)
    =\int_{\mathbb R^N}
        T_{j,\mathbf x}p_t(\mathbf x,\mathbf y)
        f(\mathbf y)\,dw(\mathbf y),
    \qquad j=1,\ldots,N.
\end{equation}

For $j=1,\ldots,N$, consider
\[
    v_j(t,\mathbf x)=T_ju(t+\varepsilon,\mathbf x).
\]
The commutation of the Dunkl operators implies that
$\mathcal Lv_j=0$. Moreover,
Lemma~\ref{lem:pointwise-logarithmic} gives
\[
    |v_j(t,\mathbf x)|
    \leq \frac{C}{t+\varepsilon}\|u\|_{**}
    \leq \frac{C}{\varepsilon}\|u\|_{**}.
\]
Thus $v_j$ is bounded and continuous on the closed
upper half-space, with boundary value $T_jf$.

For each $\varepsilon>0$, the function
$v_j(t,\mathbf x)$
is bounded, $\mathcal L$-harmonic, and continuous
on the closed upper half-space.
Since $v(0,\cdot)=T_jf_\varepsilon$,
Theorem~\ref{thm:bounded-Poisson-representation} yields
\begin{equation}\label{eq:Poisson-representation-of-derivatives}
    T_ju(t+\varepsilon,\mathbf x)
    =P_t(T_jf_{\varepsilon})(\mathbf x).
\end{equation}

We next verify that
\begin{equation}\label{eq:commutation-for-logarithmic-trace}
    P_t(T_jf_{\varepsilon})=T_jP_tf_{\varepsilon}.
\end{equation}
Choose a radial function $\chi\in C_c^\infty(\mathbb R^N)$
such that $0\leq\chi\leq1$, $\chi=1$ on $B(0,1)$,
and $\chi=0$ outside $B(0,2)$, and put
$\chi_L(\mathbf y)=\chi(\mathbf y/L)$ for $L\geq1$.
Since $\chi_L$ is $G$-invariant,
\[
    T_j(\chi_Lf_{\varepsilon})=\chi_LT_jf_{\varepsilon}+(\partial_j\chi_L)f_{\varepsilon}.
\]
Using
\[
    T_{j,\mathbf x}p_t(\mathbf x,\mathbf y)
    =-T_{j,\mathbf y}p_t(\mathbf x,\mathbf y)
\]
and the skew-symmetry of $T_j$, we obtain
\[
\begin{aligned}
    &\int_{\mathbb R^N}
        T_{j,\mathbf x}p_t(\mathbf x,\mathbf y)
        \chi_L(\mathbf y)f_{\varepsilon}(\mathbf y)\,dw(\mathbf y)=
    \int_{\mathbb R^N}
        p_t(\mathbf x,\mathbf y)
        \chi_L(\mathbf y)T_jf_{\varepsilon}(\mathbf y)\,dw(\mathbf y)\\
    &+
    \int_{\mathbb R^N}
        p_t(\mathbf x,\mathbf y)
        \partial_j\chi_L(\mathbf y)f_{\varepsilon}(\mathbf y)
        \,dw(\mathbf y).
\end{aligned}
\]
The left-hand side converges to $T_jP_tf_{\varepsilon}(\mathbf x)$
by \eqref{eq:derivative-under-Poisson-integral}.
Since $T_jf_{\varepsilon}$ is bounded, the first term on the right
converges to $P_t(T_jf_{\varepsilon})(\mathbf x)$.
Finally, the remaining term is bounded in modulus by
\[
    \frac{\|\partial_j\chi\|_\infty}{L}
    \int_{\mathbb R^N}
        p_t(\mathbf x,\mathbf y)|f_{\varepsilon}(\mathbf y)|
        \,dw(\mathbf y),
\]
and hence tends to zero. This proves
\eqref{eq:commutation-for-logarithmic-trace}.

Set
\[
    \psi(t,\mathbf x)
    =u(t+\varepsilon,\mathbf x)-P_tf_{\varepsilon}(\mathbf x).
\]
Equations
\eqref{eq:Poisson-representation-of-derivatives}
and \eqref{eq:commutation-for-logarithmic-trace} imply
\[
    T_j\psi(t,\cdot)=0,
    \qquad j=1,\ldots,N.
\]

Since $T_j\psi(t,\cdot)=0$ for every $j$,
Lemma~\ref{lem:increment-at-origin} implies that
$\psi(t,\mathbf x)=\psi(t,0)$.
Both terms defining $\psi$ are $\mathcal L$-harmonic.
Hence $\mathcal L\psi=0$ and, since $\psi$ is independent
of $\mathbf x$, we obtain $\partial_t^2\psi(t,0)=0$.
Consequently,
\begin{equation}\label{eq:affine-remainder}
    \psi(t,\mathbf x)=at+b
\end{equation}
for some $a,b\in\mathbb C$.

To eliminate the linear term, recall that
\[
    p_t(0,\mathbf y)
    =c\,\frac{t}
        {(t^2+\|\mathbf y\|^2)^{(\mathbf N+1)/2}},
    \qquad
    \mathbf N=N+\sum_{\alpha\in R}k(\alpha).
\]
By scaling and \eqref{eq:trace-logarithmic-bound},
\[
\begin{aligned}
    |P_tf_{\varepsilon}(0)|
    &\leq C_{u,\varepsilon}
        \int_{\mathbb R^N}
        p_1(0,\mathbf z)\log(2+t\|\mathbf z\|)
        \,dw(\mathbf z)\leq C'_{u,\varepsilon}\log(2+t),
\end{aligned}
\]
where we used
\[
    \int_{\mathbb R^N}
        p_1(0,\mathbf z)\log(1+\|\mathbf z\|)
        \,dw(\mathbf z)<\infty.
\]
Similarly, integrating
\eqref{eq:pointwise-Dunkl-gradient} at $\mathbf x=0$
gives
\[
    |u(t+\varepsilon,0)|
    \leq C''_{u,\varepsilon}\log(2+t),
    \qquad t\geq1.
\]
Thus \eqref{eq:affine-remainder} forces $a=0$.

Finally, by \eqref{eq:Poisson-mass},
\[
    P_tf_{\varepsilon}(0)-f_{\varepsilon}(0)
    =
    \int_{\mathbb R^N}p_t(0,\mathbf y)
        \bigl(f_{\varepsilon}(\mathbf y)-f_{\varepsilon}(0)\bigr)\,dw(\mathbf y).
\]
Given $\eta>0$, continuity provides $\delta>0$ such
that $|f_{\varepsilon}(\mathbf y)-f_{\varepsilon}(0)|<\eta$ for $\|\mathbf y\|<\delta$.
Using \eqref{eq:Poisson-at-origin}, we obtain
\[
    |P_tf_{\varepsilon}(0)-f_{\varepsilon}(0)|
    \leq \eta+
    c_{R,k}t\int_{\|\mathbf y\|\geq\delta}
        \frac{|f_{\varepsilon}(\mathbf y)-f_{\varepsilon}(0)|}
             {\|\mathbf y\|^{\mathbf N+1}}\,dw(\mathbf y).
\]
The last integral is finite by
\eqref{eq:trace-logarithmic-bound}; in polar coordinates
its tail is bounded by a constant multiple of
$\int_1^\infty(1+\log s)s^{-2}\,ds$.
Letting first $t\to0^+$ and then $\eta\to0^+$ gives
$P_tf_{\varepsilon}(0)\to f_{\varepsilon}(0)$.
Since $f_{\varepsilon}=u(\varepsilon,\cdot)$, continuity of $u$ yields
\[
    b=\lim_{t\to0^+}
        \bigl(u(t+\varepsilon,0)-P_tf_{\varepsilon}(0)\bigr)
      =u(\varepsilon,0)-f_{\varepsilon}(0)=0.
\]
This proves \eqref{eq:vertical-translate-representation}.
\end{proof}

\section{A Carleson measure criterion for BMO}\label{sec:carleson}

\begin{theorem}\label{thm:Carleson-implies-BMO}
Let $f\in L^1_{\mathrm{loc}}(dw)$ satisfy
\begin{equation}\label{eq:weighted-integrability-BMO}
    \int_{\mathbb R^N}
    \frac{|f(\mathbf x)|}
         {(1+\|\mathbf x\|)^{\mathbf N+1}}
    \,dw(\mathbf x)<\infty.
\end{equation}
If
\[
    d\nu_f(t,\mathbf x)
    =|Q_tf(\mathbf x)|^2\,dw(\mathbf x)\,\frac{dt}{t}
\]
is a Carleson measure, then
$f\in\mathrm{BMO}(dw)$ and
\begin{equation}\label{eq:Carleson-to-BMO-bound}
    \|f\|_{\mathrm{BMO}(dw)}
    \leq C\|\nu_f\|_{\mathcal C}^{1/2}.
\end{equation}
\end{theorem}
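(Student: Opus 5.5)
By the duality \eqref{eq:H1-BMO-duality}, it suffices to bound the pairing of $f$ with every bounded, compactly supported $g$ of integral zero: I will show
\[
    \Bigl|\int_{\mathbb R^N} f g\,dw\Bigr|
    \lesssim \|\nu_f\|_{\mathcal C}^{1/2}\|g\|_{H^1_{\Delta_k}} .
\]
Such $g$ are dense in $H^1_{\Delta_k}$. Together with the weighted integrability \eqref{eq:weighted-integrability-BMO}, this shows that the linear functional $g\mapsto\int fg\,dw$ extends to a bounded functional on $H^1_{\Delta_k}$. Hence it is represented by some $\tilde f\in\mathrm{BMO}(dw)$ with $\|\tilde f\|_{\mathrm{BMO}}\lesssim\|\nu_f\|_{\mathcal C}^{1/2}$. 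Since $f-\tilde f$ annihilates all bounded compactly supported functions of mean zero, it is constant, so $f\in\mathrm{BMO}(dw)$ with the asserted bound.

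The key identity is a Calderón reproducing formula. Since $\int_0^\infty Q_t^2\,dt/t = \int_0^\infty (t\lambda)^2e^{-2t\lambda}\,dt/t=\tfrac14$ for $\lambda>0$, we have
\[
    \int fg\,dw=4\int_0^\infty\int_{\mathbb R^N} Q_tf(\mathbf x)\,Q_tg(\mathbf x)\,dw(\mathbf x)\,\frac{dt}{t}.
\]
I would prove this by truncating to $\delta<t<1/\delta$. By the semigroup identity, $\int_\delta^{1/\delta}Q_t^2\,dt/t=\Psi_\delta(t\sqrt{-\Delta_k})$ is an explicit combination of $P_{2\delta}$, $P_{2/\delta}$ and $Q_{2\delta}$, $Q_{2/\delta}$. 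Moving these operators onto $g$ is justified by Fubini, since $g$ is bounded with compact support and the kernel bounds \eqref{eq:Poisson-size} give $|Q_tg(\mathbf x)|\lesssim_{g,t}\rho(\mathbf x)$-type decay, which is integrable against $|f|$ by \eqref{eq:weighted-integrability-BMO}. Next, $P_{2\delta}g\to g$ as $\delta\to0$, and $P_{2/\delta}g, Q_{2/\delta}g\to0$ in a sense strong enough to pair with $f$. For the latter I use the cancellation $\int g\,dw=0$: it lets me replace $p_s(\mathbf x,\mathbf y)$ by $p_s(\mathbf x,\mathbf y)-p_s(\mathbf x,0)$, which gains a factor $1/s$ uniformly over the weight. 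The same argument handles the $Q$ terms.

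With the reproducing formula in hand, the classical tent-space argument applies. The Carleson condition, via the tent formulation $\|\nu\|_{\mathcal C}\asymp\sup_B\nu(\widehat B)/w(B)$, together with the area-function bound \eqref{eq:H1-square-characterization}, should give
\[
    \int_0^\infty\!\!\int |Q_tf|\,|Q_tg|\,dw\,\frac{dt}{t}\lesssim \|\nu_f\|_{\mathcal C}^{1/2}\|Sg\|_{L^1(dw)}.
\]
This is the Coifman--Meyer--Stein duality between tent spaces $T^1$ and $T^\infty$ on the doubling space $(\mathbb R^N,\|\cdot\|,dw)$. I would reproduce the stopping-time proof. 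Define the truncated cone integrals $C(\mathbf x,r)$ of $|Q_tg|^2$ over $t<r$, and let $r(\mathbf x)$ be the largest $r$ for which $C(\mathbf x,r)\le c\,Sg(\mathbf x)$. Then use the Carleson estimate on the set $\{Sg\le\lambda\}$ in its relative form: the Carleson measure of the region above the "good" part of a ball is controlled by $w$ of the ball, which uses Whitney-type covering and doubling. Finally apply Cauchy--Schwarz on each level set. Here I must handle the weight $1/w(B(\mathbf x,t))$ in $S$. Doubling gives $w(B(\mathbf y,t))\asymp w(B(\mathbf x,t))$ for $\|\mathbf x-\mathbf y\|<t$, so Fubini yields $\int_{\mathbb R^N}\mathbf 1_{\|\mathbf x-\mathbf y\|<t}\,dw(\mathbf x)/w(B(\mathbf x,t))\asymp1$, exactly as in the Euclidean case.

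The main obstacle I expect is the justification of the reproducing formula for $f$ that is merely in $L^1((1+\|\mathbf x\|)^{-\mathbf N-1}dw)$, specifically the large-$t$ limit. The Poisson kernel \eqref{eq:Poisson-size} is governed by the orbit distance $d(\mathbf x,\mathbf y)$ but the volume of the Euclidean ball. Obtaining a decay $|P_sg(\mathbf x)|\lesssim_g s^{-1}(1+\|\mathbf x\|+s)^{-\mathbf N}$-type bound, uniform enough for dominated convergence against $|f|$, requires using the smoothness estimate \eqref{eq:Poisson-ordinary-derivatives} in $\mathbf y$ on the support of $g$. If this is delicate, I would instead use the density of finite combinations of atoms and the fact that the pairing is absolutely convergent for each $\delta$, passing to the limit through the uniform tent-space bound.
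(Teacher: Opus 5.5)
Your outline is sound, and for the reproducing formula it is the paper's argument. The paper uses the truncated identity $4\int_a^bQ_t^2\,dt/t=P_{2a}-P_{2b}+Q_{2a}-Q_{2b}$ and Fubini. For large $t$ it combines the cancellation of $g$ with the ordinary $\mathbf y$-gradient bound $|\nabla_{\mathbf y}p_t|\lesssim t^{-1}p_t$ on $\operatorname{supp}g$. The step you flag as the main obstacle is therefore not delicate: for $\|\mathbf x\|>2R$ one has $d(\mathbf x,\mathbf y)\asymp\|\mathbf x\|$ on $B(0,R)$, and $w(B(\mathbf x,r))\gtrsim r^{\mathbf N}$.

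You should, however, state explicitly the uniform envelope $\sup_{t>0}(|P_tg|+|Q_tg|)\le C_g\rho$, obtained by size for $t\le R$ and by cancellation for $t>R$. Your phrase ``in a sense strong enough to pair with $f$'' leaves this implicit. The small-$t$ limits $P_{2a}g\to g$ and $Q_{2a}g\to0$ against $f$ need it too: combine it with a.e.\ convergence along a subsequence, or, as in the paper, with approximation of $f$ in $L^1(\rho\,dw)$. The Fubini step also needs more than decay of $Q_tg$. It needs $\int|q_t(\mathbf x,\mathbf y)|\rho(\mathbf x)\,dw(\mathbf x)\lesssim p_{t+1}(0,\mathbf y)$, which follows from $|q_t|\lesssim p_t$ and the semigroup property.

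You depart from the paper in two places.

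\emph{The bound $\iint|Q_tf|\,|Q_tg|\,dw\,dt/t\lesssim\|\nu_f\|_{\mathcal C}^{1/2}\|Sg\|_{L^1(dw)}$.} The paper does not reprove Coifman--Meyer--Stein duality. It cites Russ's atomic decomposition of $T^1_2$ on the doubling space, bounds the pairing of $Q_tf$ with each tent atom by Cauchy--Schwarz and the Carleson condition, and sums with Fatou along an a.e.\ convergent subsequence. That is shorter given the black box; your route is self-contained, using only doubling, Fubini and Chebyshev.

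\emph{The final step.} The paper avoids abstract duality and the uniqueness-modulo-constants argument. It tests the pairing bound against $g=(h-h_B)\chi_B$ with $|h|\le1$, which is $2w(B)$ times a $(1,\infty)$-atom. So it needs only the uniform $H^1_{\Delta_k}$ bound for atoms. Your version is also valid.

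One correction to your sketch of the tent-space step: the stopping time as you define it does nothing. The truncated area integral of $Q_tg$ never exceeds $Sg(\mathbf x)$, so for $c\ge1$ your ``largest $r$'' is $+\infty$. In any case, a stopping time built from $g$ alone never brings the Carleson condition on $f$ into play. In the Coifman--Meyer--Stein proof the stopping time is imposed on the $T^\infty$ function instead:
\begin{enumerate}
\item Write $\mathcal A_r(\mathbf x)$ for the area integral of $Q_tf$ over the truncated cone $\{(t,\mathbf y):\|\mathbf x-\mathbf y\|<t<r\}$, and set $r(\mathbf x)=\sup\{r:\mathcal A_r(\mathbf x)\le c\|\nu_f\|_{\mathcal C}^{1/2}\}$.
\item For a ball $B$ of radius $r$, $\int_B\mathcal A_r^2\,dw\lesssim\nu_f((0,r)\times2B)\lesssim\|\nu_f\|_{\mathcal C}w(B)$. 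Chebyshev then gives $w(\{\mathbf x\in B:r(\mathbf x)\ge r\})\ge\frac12w(B)$ for $c$ large.
\item Insert this lower bound, with $B=B(\mathbf y,t)$, into $\iint|Q_tf|\,|Q_tg|$. Fubini and Cauchy--Schwarz in each cone then give the bound $\lesssim\int\mathcal A_{r(\mathbf x)}(\mathbf x)\,Sg(\mathbf x)\,dw(\mathbf x)\le c\|\nu_f\|_{\mathcal C}^{1/2}\|Sg\|_{L^1(dw)}$.
\end{enumerate}
The level-set description you also give is the Fefferman--Stein proof: layers between tents over enlargements of $\{Sg>2^j\}$, with a Whitney covering giving $\nu_f(\widehat O)\lesssim\|\nu_f\|_{\mathcal C}w(O)$. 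It also works, but it is a separate argument, not part of the stopping-time one.
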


\begin{proof}
Write $q_t(\mathbf x,\mathbf y)
=-t\partial_tp_t(\mathbf x,\mathbf y)$. Then 
\[Q_tf(\mathbf{x})=\int_{\mathbb{R}^N}q_t(\mathbf{x},\mathbf{y})f(\mathbf{y})\,dw(\mathbf{y}),\]
and put
\[
    \rho(\mathbf x)=p_1(0,\mathbf x)
    \asymp(1+\|\mathbf x\|)^{-\mathbf N-1}.
\]
Thus \eqref{eq:weighted-integrability-BMO} is equivalent
to $\int |f|\rho\,dw<\infty$.
The Poisson kernel estimates ensure that $P_tf$ and
$Q_tf$ are well defined by absolutely convergent
integrals.

We first establish the reproducing identity
\begin{equation}\label{eq:BMO-reproducing}
    \int_{\mathbb R^N}f(\mathbf x)g(\mathbf x)
        \,dw(\mathbf x)
    =
    4\int_0^\infty\int_{\mathbb R^N}
        Q_tf(\mathbf x)Q_tg(\mathbf x)
        \,dw(\mathbf x)\,\frac{dt}{t}
\end{equation}
for every bounded, compactly supported function $g$
satisfying $\int g\,dw=0$.

The estimates \eqref{eq:Poisson-size},
\eqref{eq:Poisson-ordinary-derivatives}, and
\eqref{eq:definition-Q}, together with the cancellation
of $g$, imply
\begin{equation}\label{eq:test-function-envelope}
    |g(\mathbf x)|
    +\sup_{t>0}|P_tg(\mathbf x)|
    +\sup_{t>0}|Q_tg(\mathbf x)|
    \leq C_g\rho(\mathbf x).
\end{equation}
Indeed, choose $R\geq1$ such that
$\operatorname{supp}g\subset B(0,R)$.
By positivity and \eqref{eq:Poisson-mass},
$\|P_tg\|_\infty\leq\|g\|_\infty$.
Moreover, \eqref{eq:definition-Q} and
\eqref{eq:Poisson-ordinary-derivatives} give
$|q_t(\mathbf x,\mathbf y)|\lesssim
p_t(\mathbf x,\mathbf y)$, and hence
$\|Q_tg\|_\infty\lesssim\|g\|_\infty$.
Since $\rho$ is bounded below on $B(0,2R)$,
these bounds prove \eqref{eq:test-function-envelope}
there.

Now suppose that $\|\mathbf x\|>2R$.
For $\|\mathbf y\|\leq R$ we have
\[
    \tfrac12\|\mathbf x\|
    \leq d(\mathbf x,\mathbf y)
    \leq\tfrac32\|\mathbf x\|.
\]
By \eqref{eq:volume-estimate},
\[
    w(B(\mathbf x,t+d(\mathbf x,\mathbf y)))
    \gtrsim(t+\|\mathbf x\|)^{\mathbf N}.
\]
Consequently, \eqref{eq:Poisson-size} yields
\begin{equation}\label{eq:test-kernel-size}
    |p_t(\mathbf x,\mathbf y)|
    +|q_t(\mathbf x,\mathbf y)|
    \lesssim
    \frac{t}{(t+\|\mathbf x\|)^{\mathbf N+1}}.
\end{equation}
For $0<t\leq R$, integration against $|g|$ gives
\[
    |P_tg(\mathbf x)|+|Q_tg(\mathbf x)|
    \lesssim
    R\|g\|_{L^1(dw)}\|\mathbf x\|^{-\mathbf N-1}.
\]

For $t>R$, we use cancellation. By
\eqref{eq:Poisson-ordinary-derivatives} and
$q_t=-t\partial_tp_t$,
\[
    |\nabla_{\mathbf y}p_t(\mathbf x,\mathbf y)|
    +|\nabla_{\mathbf y}q_t(\mathbf x,\mathbf y)|
    \lesssim t^{-1}p_t(\mathbf x,\mathbf y)
    \lesssim(t+\|\mathbf x\|)^{-\mathbf N-1},
    \qquad \|\mathbf y\|\leq R.
\]
Here $\nabla_{\mathbf y}$ denotes the ordinary gradient.
The mean value theorem therefore gives, for
$K_t=p_t$ or $K_t=q_t$,
\[
    |K_t(\mathbf x,\mathbf y)-K_t(\mathbf x,0)|
    \lesssim
    \|\mathbf y\|(t+\|\mathbf x\|)^{-\mathbf N-1}.
\]
Since $\int g\,dw=0$, it follows that
\[
\begin{aligned}
    |P_tg(\mathbf x)|+|Q_tg(\mathbf x)|
    &\lesssim
    (t+\|\mathbf x\|)^{-\mathbf N-1}
    \int_{\mathbb R^N}\|\mathbf y\||g(\mathbf y)|
        \,dw(\mathbf y)\lesssim
    R\|g\|_{L^1(dw)}\|\mathbf x\|^{-\mathbf N-1}.
\end{aligned}
\]
Together with
$\rho(\mathbf x)\asymp(1+\|\mathbf x\|)^{-\mathbf N-1}$,
this proves \eqref{eq:test-function-envelope}.

For $0<a<b<\infty$, the spectral theorem gives
\begin{equation}\label{eq:truncated-reproducing-BMO}
    4\int_a^b Q_t^2\,\frac{dt}{t}
    =P_{2a}-P_{2b}+Q_{2a}-Q_{2b}
\end{equation}
on $L^2(dw)$.
To pair this identity with $f$, observe that
$|q_t(\mathbf x,\mathbf y)|\leq Cp_t(\mathbf x,\mathbf y)$.
By \eqref{eq:test-function-envelope}, symmetry,
and the semigroup property,
\[
\begin{aligned}
    &\int_{\mathbb R^N}
        |q_t(\mathbf x,\mathbf y)|\,|Q_tg(\mathbf x)|
        \,dw(\mathbf x)\leq
        C_g\int_{\mathbb R^N}
        p_t(\mathbf x,\mathbf y)\rho(\mathbf x)
        \,dw(\mathbf x)=C_g p_{t+1}(0,\mathbf y)
        \leq C_{g,a,b}\rho(\mathbf y)
\end{aligned}
\]
for $a\leq t\leq b$. Hence all relevant integrals on $[a,b]$ are absolutely
convergent. Fubini's theorem and
\eqref{eq:truncated-reproducing-BMO} therefore yield
\begin{equation}\label{eq:paired-truncated-reproducing}
\begin{aligned}
    &4\int_a^b\int_{\mathbb R^N}
        Q_tf(\mathbf x)Q_tg(\mathbf x)
        \,dw(\mathbf x)\,\frac{dt}{t}\\
    &\qquad=
    \int_{\mathbb R^N}
        f(\mathbf x)
        \bigl(P_{2a}g-P_{2b}g+Q_{2a}g-Q_{2b}g\bigr)
        (\mathbf x)\,dw(\mathbf x).
\end{aligned}
\end{equation}

Put $L=\sqrt{-{\Delta_k}}$. Since $g\in L^1(dw)\cap L^2(dw)$,
the spectral theorem and dominated convergence give
\[
    P_{2a}g=e^{-2aL}g\longrightarrow g,
    \qquad
    Q_{2a}g=2aLe^{-2aL}g\longrightarrow0
    \quad\text{in }L^2(dw)\quad(a\to0^+).
\]
Indeed, for every $\lambda\geq0$,
$e^{-2a\lambda}\to1$ and
$2a\lambda e^{-2a\lambda}\to0$, while
\[
    |e^{-2a\lambda}|\leq1,
    \qquad
    |2a\lambda e^{-2a\lambda}|\leq e^{-1}.
\]

For the limits at infinity,~\eqref{eq:volume-estimate},~\eqref{eq:Poisson-size}, and~\eqref{eq:Poisson-ordinary-derivatives} give
\[
    |q_s(\mathbf x,\mathbf y)|
    \lesssim p_s(\mathbf x,\mathbf y)
    \lesssim s^{-\mathbf N}.
\]
By symmetry and normalization of $p_s$, these estimates
imply, for $A_s=P_s$ or $A_s=Q_s$,
\[
    \|A_sg\|_{L^1(dw)}\lesssim\|g\|_{L^1(dw)},
    \qquad
    \|A_sg\|_{L^\infty(dw)}
    \lesssim s^{-\mathbf N}\|g\|_{L^1(dw)}.
\]
Consequently,
\[
    \|A_sg\|_{L^2(dw)}^2
    \leq
    \|A_sg\|_{L^\infty(dw)}
    \|A_sg\|_{L^1(dw)}
    \lesssim s^{-\mathbf N}\|g\|_{L^1(dw)}^2
    \longrightarrow0
    \qquad(s\to\infty).
\]
Taking $s=2b$, we obtain
\[
    P_{2b}g\longrightarrow0,
    \qquad
    Q_{2b}g\longrightarrow0
    \quad\text{in }L^2(dw)\quad(b\to\infty).
\]
These limits remain valid after integration against $f$.
Indeed, approximate $f$ in $L^1(\rho\,dw)$ by bounded,
compactly supported functions. For the approximants,
the assertion follows from $L^2$ convergence; the
errors are uniformly controlled by
\eqref{eq:test-function-envelope}.

It remains to prove absolute convergence of the integral
over all $t>0$. We use the atomic decomposition of tent
spaces on the doubling space
$(\mathbb R^N,\|\cdot-\cdot\|,dw)$.

For a measurable function $H$ on $\mathbb R_+^{1+N}$, set
\[
    \mathcal A H(\mathbf x)
    =
    \left(
        \int_0^\infty
        \int_{\|\mathbf x-\mathbf y\|<t}
            |H(t,\mathbf y)|^2
            \frac{dw(\mathbf y)\,dt}
                 {t\,w(B(\mathbf x,t))}
    \right)^{1/2},
    \qquad
    \|H\|_{T^1_2}=\|\mathcal A H\|_{L^1(dw)}.
\]
A $T^1_2$-atom associated with a ball $B$ is a function
$A$ supported in $\widehat B$ and satisfying
\[
    \int_0^\infty\int_{\mathbb R^N}
        |A(t,\mathbf x)|^2\,dw(\mathbf x)\,\frac{dt}{t}
    \leq w(B)^{-1}.
\]
By \cite[Theorem~1.1]{Russ}, every $H\in T^1_2$
admits a decomposition
\[
    H=\sum_{\ell=1}^\infty\lambda_\ell A_\ell,
    \qquad
    \sum_{\ell=1}^\infty|\lambda_\ell|
    \leq C\|H\|_{T^1_2},
\]
with convergence in $T^1_2$. A subsequence of the
partial sums converges almost everywhere with respect
to $dw(\mathbf x)\,dt/t$.

We apply this result to $H(t,\mathbf x)=Q_tg(\mathbf x)$.
By the definition of $Sg$ and
\eqref{eq:H1-square-characterization},
\[
    \|H\|_{T^1_2}
    =\|Sg\|_{L^1(dw)}
    \lesssim\|g\|_{H^1_{\Delta_k}}.
\]
Thus
\begin{equation}\label{eq:Qtg-tent-decomposition}
    Q_tg(\mathbf x)
    =\sum_{\ell=1}^\infty\lambda_\ell A_\ell(t,\mathbf x),
    \qquad
    \sum_{\ell=1}^\infty|\lambda_\ell|
    \lesssim\|Sg\|_{L^1(dw)}.
\end{equation}

Let $A_\ell$ be associated with the ball $B_\ell$.
Since
\[
    d\nu_f(t,\mathbf x)
    =|Q_tf(\mathbf x)|^2\,dw(\mathbf x)\,\frac{dt}{t},
\]
the Carleson condition and the equivalence of the
tent and box definitions give
\[
    \int_{\widehat B_\ell}|Q_tf(\mathbf x)|^2
        \,dw(\mathbf x)\,\frac{dt}{t}
    =\nu_f(\widehat B_\ell)
    \lesssim\|\nu_f\|_{\mathcal C}\,w(B_\ell).
\]
Using the support and normalization of $A_\ell$,
the Cauchy--Schwarz inequality therefore yields
\[
\begin{aligned}
    &\int_0^\infty\int_{\mathbb R^N}
        |Q_tf(\mathbf x)|\,|A_\ell(t,\mathbf x)|
        \,dw(\mathbf x)\,\frac{dt}{t}\\
    &\quad\leq
    \left(
        \int_{\widehat B_\ell}|Q_tf(\mathbf x)|^2
            \,dw(\mathbf x)\,\frac{dt}{t}
    \right)^{1/2}
    \left(
        \int_{\widehat B_\ell}|A_\ell(t,\mathbf x)|^2
            \,dw(\mathbf x)\,\frac{dt}{t}
    \right)^{1/2}\\
    &\quad\lesssim
        \|\nu_f\|_{\mathcal C}^{1/2}
        w(B_\ell)^{1/2}w(B_\ell)^{-1/2}
    =\|\nu_f\|_{\mathcal C}^{1/2}.
\end{aligned}
\]
Applying this estimate to finite partial sums in
\eqref{eq:Qtg-tent-decomposition} and then using
Fatou's lemma along an almost everywhere convergent
subsequence, we obtain
\begin{equation}\label{eq:Carleson-Hardy-pairing}
\begin{aligned}
    &\int_0^\infty\int_{\mathbb R^N}
        |Q_tf(\mathbf x)|\,|Q_tg(\mathbf x)|
        \,dw(\mathbf x)\,\frac{dt}{t}\lesssim
        \|\nu_f\|_{\mathcal C}^{1/2}
        \sum_{\ell=1}^\infty|\lambda_\ell|\\
    &\qquad\lesssim
        \|\nu_f\|_{\mathcal C}^{1/2}
        \|Sg\|_{L^1(dw)}
    \lesssim
        \|\nu_f\|_{\mathcal C}^{1/2}
        \|g\|_{H^1_{\Delta_k}}.
\end{aligned}
\end{equation}

In particular, the integral in
\eqref{eq:BMO-reproducing} is absolutely convergent.
Letting $a\to0^+$ and $b\to\infty$ in
\eqref{eq:paired-truncated-reproducing} proves
\eqref{eq:BMO-reproducing}. Combining it with
\eqref{eq:Carleson-Hardy-pairing}, we obtain
\begin{equation}\label{eq:bounded-pairing-BMO}
    \left|\int_{\mathbb R^N}fg\,dw\right|
    \leq C\|\nu_f\|_{\mathcal C}^{1/2}
        \|g\|_{H^1_{\Delta_k}}.
\end{equation}

Finally, fix a Euclidean ball $B$ and a measurable
function $h$ on $B$ with $|h|\leq1$, and set
\[
    g=(h-h_B)\chi_B.
\]
Then $g/(2w(B))$ is a $(1,\infty)$-atom, and hence
$\|g\|_{H^1_{\Delta_k}}\leq Cw(B)$.
Applying \eqref{eq:bounded-pairing-BMO} gives
\[
    \left|\int_B(f-f_B)h\,dw\right|
    =\left|\int_{\mathbb R^N}fg\,dw\right|
    \leq C\|\nu_f\|_{\mathcal C}^{1/2}w(B).
\]
Taking the supremum over $|h|\leq1$ and then over $B$
proves \eqref{eq:Carleson-to-BMO-bound}.
\end{proof}

\begin{lemma}\label{lem:uniform-BMO-traces}
Let $u\in C^\infty(\mathbb R_+^{1+N};\mathbb C)$ satisfy
$\mathcal Lu=0$ and $\|u\|_{**}<\infty$. For
$\varepsilon>0$, set
\[
    u_\varepsilon(t,\mathbf x)=u(t+\varepsilon,\mathbf x),
    \qquad
    f_\varepsilon(\mathbf x)=u(\varepsilon,\mathbf x).
\]
Then
\begin{equation}\label{eq:uniform-shift-Carleson}
    \|u_\varepsilon\|_{**}\leq C\|u\|_{**}
\end{equation}
and
\begin{equation}\label{eq:uniform-BMO-traces}
    \|f_\varepsilon\|_{\mathrm{BMO}(dw)}
    \leq C\|u\|_{**},
\end{equation}
with $C$ independent of $u$ and $\varepsilon$.
\end{lemma}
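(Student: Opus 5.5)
Both estimates follow from the earlier results. The first is a change of variables in the Carleson box. The second combines Proposition~\ref{prop:vertical-translates} with Theorem~\ref{thm:Carleson-implies-BMO}.

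\emph{Step 1: the shifted Carleson norm.} Fix $\mathbf x_0$ and $r>0$, and put $B=B(\mathbf x_0,r)$. Since $\widetilde\nabla_k u_\varepsilon(t,\mathbf x)=\widetilde\nabla_k u(t+\varepsilon,\mathbf x)$, the substitution $s=t+\varepsilon$ gives
\[
    \int_0^r\int_B t\,|\widetilde\nabla_k u_\varepsilon|^2\,dw\,dt
    =\int_\varepsilon^{r+\varepsilon}\int_B (s-\varepsilon)\,
        |\widetilde\nabla_k u(s,\mathbf y)|^2\,dw(\mathbf y)\,ds .
\]
We split into two cases.

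If $\varepsilon\le r$, use $s-\varepsilon\le s$ and enlarge the domain to $(0,2r)\times B(\mathbf x_0,2r)$. Doubling then bounds the quantity by $C\|u\|_{**}^2\,w(B)$.

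If $\varepsilon>r$, use the pointwise bound \eqref{eq:pointwise-Dunkl-gradient}, which gives $|\widetilde\nabla_k u(s,\cdot)|\le C\|u\|_{**}/s\le C\|u\|_{**}/\varepsilon$. The integral is then at most
\[
    C\|u\|_{**}^2\,\varepsilon^{-2}\,w(B)\int_0^r t\,dt
    \le C\|u\|_{**}^2\,w(B).
\]
Together the two cases prove \eqref{eq:uniform-shift-Carleson}.

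\emph{Step 2: the trace $f_\varepsilon$ is in $\mathrm{BMO}$.} By Corollary~\ref{cor:logarithmic-growth-of-traces}, $f_\varepsilon$ has logarithmic growth, so it satisfies \eqref{eq:weighted-integrability-BMO}. By Proposition~\ref{prop:vertical-translates}, $P_tf_\varepsilon(\mathbf x)=u_\varepsilon(t,\mathbf x)$. Differentiating under the integral, which is justified by \eqref{eq:Poisson-differentiation}, gives
\[
    Q_tf_\varepsilon(\mathbf x)=-t\,\partial_t u_\varepsilon(t,\mathbf x).
\]
Hence $|Q_tf_\varepsilon|^2\,dt/t\le t\,|\widetilde\nabla_k u_\varepsilon|^2\,dt$. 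Therefore
\[
    \|\nu_{f_\varepsilon}\|_{\mathcal C}\le\|u_\varepsilon\|_{**}^2\le C\|u\|_{**}^2 .
\]
Theorem~\ref{thm:Carleson-implies-BMO} now yields \eqref{eq:uniform-BMO-traces}.

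The only delicate point is the case $\varepsilon>r$ in Step~1. There the weight $s-\varepsilon$ is no longer comparable to the box scale, so the box cannot simply be enlarged. This is exactly why the pointwise gradient bound \eqref{eq:pointwise-Dunkl-gradient} is used in that case instead of the Carleson condition.
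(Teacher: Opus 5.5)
Your proof is correct and follows the paper's argument essentially step for step. It uses the same case split ($\varepsilon\le r$ handled by enlarging the box and using doubling, $\varepsilon>r$ by the pointwise bound \eqref{eq:pointwise-Dunkl-gradient}, which yields the factor $r^2/\varepsilon^2$), then $Q_tf_\varepsilon=-t\partial_tu_\varepsilon$ from Proposition~\ref{prop:vertical-translates}, and finally Theorem~\ref{thm:Carleson-implies-BMO}. One small correction to your closing remark: when $\varepsilon>r$ the obstruction is not the weight $s-\varepsilon\le r$, which is harmless. It is that any Carleson box containing $(\varepsilon,r+\varepsilon)\times B$ has radius at least $r+\varepsilon$, so its $w$-measure is not controlled by $w(B)$ when $\varepsilon/r$ is large.
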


\begin{proof}
Put $M=\|u\|_{**}$.
Fix a ball $B=B(\mathbf x_0,r)$ and write
\[
    I_\varepsilon(B)
    =
    \frac1{w(B)}
    \int_0^r\int_B
    t\,|\widetilde\nabla_k u(t+\varepsilon,\mathbf x)|^2
    \,dw(\mathbf x)\,dt.
\]
If $r\geq\varepsilon$, the change of variables
$s=t+\varepsilon$ gives
\[
\begin{aligned}
    I_\varepsilon(B)
    &=
    \frac1{w(B)}
    \int_\varepsilon^{r+\varepsilon}\int_B
    (s-\varepsilon)
    |\widetilde\nabla_k u(s,\mathbf x)|^2
    \,dw(\mathbf x)\,ds\\
    &\leq
    \frac1{w(B)}
    \int_0^{2r}\int_{2B}
    s\,|\widetilde\nabla_k u(s,\mathbf x)|^2
    \,dw(\mathbf x)\,ds\leq \frac{w(2B)}{w(B)}M^2
    \leq CM^2.
\end{aligned}
\]
If $r<\varepsilon$, the pointwise estimate
\eqref{eq:pointwise-Dunkl-gradient} yields
\[
\begin{aligned}
    I_\varepsilon(B)
    &\leq CM^2\int_0^r\frac{t}{(t+\varepsilon)^2}\,dt\leq CM^2\frac{r^2}{2\varepsilon^2}
    \leq CM^2.
\end{aligned}
\]
Taking the supremum over $B$ proves
\eqref{eq:uniform-shift-Carleson}.

By Corollary~\ref{cor:logarithmic-growth-of-traces},
$f_\varepsilon$ has at most logarithmic growth.
In particular,
\[
    \int_{\mathbb R^N}
    \frac{|f_\varepsilon(\mathbf x)|}
         {(1+\|\mathbf x\|)^{\mathbf N+1}}
    \,dw(\mathbf x)<\infty.
\]
Moreover, Proposition~\ref{prop:vertical-translates}
and the identity $Q_t=-t\partial_tP_t$ give
\[
    Q_tf_\varepsilon(\mathbf x)
    =-t\partial_tu_\varepsilon(t,\mathbf x).
\]
Consequently, the measure
\[
    d\nu_\varepsilon(t,\mathbf x)
    =|Q_tf_\varepsilon(\mathbf x)|^2
        \,dw(\mathbf x)\,\frac{dt}{t}
\]
satisfies
\[
    \|\nu_\varepsilon\|_{\mathcal C}
    \leq \|u_\varepsilon\|_{**}^2
    \leq CM^2.
\]
Applying Theorem~\ref{thm:Carleson-implies-BMO},
we conclude that
\[
    \|f_\varepsilon\|_{\mathrm{BMO}(dw)}
    \leq C\|\nu_\varepsilon\|_{\mathcal C}^{1/2}
    \leq CM,
\]
as required.
\end{proof}

\section{Proof of the main theorem}
\label{sec:proof-main-theorem}

We first record a consequence of the atomic
characterization of $H^1_{\Delta_k}$.

\begin{lemma}\label{lem:decaying-Hardy-function}
Let $h$ be a measurable function on $\mathbb R^N$
such that
\[
    |h(\mathbf y)|
    \leq C_h(1+\|\mathbf y\|)^{-\mathbf N-1},
    \qquad
    \int_{\mathbb R^N}h\,dw=0.
\]
Then $h\in H^1_{\Delta_k}$ and
$\|h\|_{H^1_{\Delta_k}}\lesssim C_h$.
\end{lemma}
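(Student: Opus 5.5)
The plan is to decompose $h$ into atoms along dyadic annuli, in the standard way for functions with decay and vanishing integral. Since $H^1_{\Delta_k}=H^1_{\mathrm{at}}(dw)$ with equivalent norms, it suffices to write $h=\sum_j\lambda_j a_j$ with $(1,\infty)$-atoms $a_j$ and $\sum|\lambda_j|\lesssim C_h$. Let
\[
E_0=B(0,1),\qquad E_j=B(0,2^j)\setminus B(0,2^{j-1})\quad (j\geq1),\qquad B_j=B(0,2^j),
\]
and set $h_j=h\mathbf 1_{E_j}$ and $m_j=\int h_j\,dw$. By the scaling $w(B(0,s))=s^{\mathbf N}w(B(0,1))$, we have
\[
\|h_j\|_\infty\lesssim C_h2^{-j(\mathbf N+1)},\qquad |m_j|\lesssim C_h2^{-j}.
\]

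First I will remove the means. Write $\phi_j=\mathbf 1_{B_j}/w(B_j)$, so that
\[
h=\sum_j(h_j-m_j\phi_j)+\sum_j m_j\phi_j .
\]
Each term $h_j-m_j\phi_j$ is supported in $B_j$ and has integral zero. Its sup norm is bounded by $\|h_j\|_\infty+|m_j|/w(B_j)\lesssim C_h2^{-j}/w(B_j)$, so it equals $C C_h2^{-j}$ times an atom. These coefficients are summable, with total $\lesssim C_h$.

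Next I handle the second sum by Abel summation. Put $M_j=\sum_{i\geq j}m_i$. Because $\int h\,dw=0$ we have $M_0=0$, and the tail bound gives $|M_j|\lesssim C_h2^{-j}$. Then
\[
\sum_j m_j\phi_j=\sum_{j\geq0}(M_j-M_{j+1})\phi_j=\sum_{j\geq0}M_{j+1}(\phi_{j+1}-\phi_j),
\]
where the boundary term $M_0\phi_0$ vanishes. Each $\phi_{j+1}-\phi_j$ is supported in $B_{j+1}$, has integral zero, and has sup norm $\leq 2/w(B_j)\lesssim 1/w(B_{j+1})$ by doubling. So it is a bounded multiple of an atom, with coefficient $\lesssim |M_{j+1}|\lesssim C_h2^{-j}$, and again the sum of coefficients is $\lesssim C_h$.

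The main care point is convergence in $L^1(dw)$ of all the series, which the definition of $H^1_{\mathrm{at}}$ requires. Each term has $L^1$ norm bounded by its coefficient, since atoms have $L^1$ norm at most $1$, so both series converge absolutely in $L^1(dw)$. The rearrangement by Abel summation is then justified: the partial sums differ by $M_{J+1}\phi_J$, whose $L^1$ norm is $|M_{J+1}|\to0$. Finally, $h$ itself lies in $L^1(dw)$ because $(1+\|\mathbf y\|)^{-\mathbf N-1}$ is $dw$-integrable. This gives $h\in H^1_{\mathrm{at}}(dw)=H^1_{\Delta_k}$ with $\|h\|_{H^1_{\Delta_k}}\lesssim C_h$.
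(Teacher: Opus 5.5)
Your proposal is correct and follows the paper's proof essentially line by line: you use the same dyadic annuli $E_j$, the same mean-removal with $\mathbf 1_{B_j}/w(B_j)$, and the same Abel summation, in which your tail sums $M_{j+1}$ are the paper's $b_j$, paired with $\varphi_{j+1}-\varphi_j$. Your explicit verification that the partial sums converge in $L^1(dw)$ (they differ by $M_{J+1}\phi_J$) fills in a detail the paper only asserts.
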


\begin{proof}
Put $B_j=B(0,2^j)$ for $j\geq0$, and let
\[
    E_0=B_0,
    \qquad E_j=B_j\setminus B_{j-1}\quad(j\geq1),
    \qquad
    \varphi_j=\frac{\mathbf1_{B_j}}{w(B_j)}.
\]
Set
\[
    m_j=\int_{E_j}h\,dw,
    \qquad
    h_j=h\mathbf1_{E_j}-m_j\varphi_j.
\]
Since $w(B_j)=2^{j\mathbf N}w(B_0)$, the decay
assumption gives
\[
    |m_j|\lesssim C_h2^{-j},
    \qquad
    \|h_j\|_\infty
    \lesssim C_h2^{-j}w(B_j)^{-1}.
\]
Moreover, $h_j$ is supported in $B_j$ and has
integral zero. Thus each $h_j$ is a scalar multiple
of a $(1,\infty)$-atom, with coefficient bounded
by $CC_h2^{-j}$.

Define
\[
    b_j=\sum_{\ell=j+1}^\infty m_\ell.
\]
Then $|b_j|\lesssim C_h2^{-j}$, and the cancellation
$\sum_{j\geq0}m_j=0$ gives
\[
    h
    =
    \sum_{j=0}^\infty h_j
    +
    \sum_{j=0}^\infty
        b_j(\varphi_{j+1}-\varphi_j).
\]
Both series converge in $L^1(dw)$.
The function $\varphi_{j+1}-\varphi_j$ has integral
zero, is supported in $B_{j+1}$, and satisfies
\[
    \|\varphi_{j+1}-\varphi_j\|_\infty
    \lesssim w(B_{j+1})^{-1}.
\]
Consequently, the displayed representation is an
atomic decomposition with sum of absolute
coefficients bounded by
$CC_h\sum_{j\geq0}2^{-j}$.
The atomic characterization of $H^1_{\Delta_k}$
therefore proves the assertion.
\end{proof}

\begin{proof}[Proof of Theorem~\ref{thm:main}]
Put $M=\|u\|_{**}$ and, for $n\geq1$, set
\[
    f_n(\mathbf x)=u(n^{-1},\mathbf x).
\]
By Lemma~\ref{lem:uniform-BMO-traces},
\[
    \sup_{n\geq1}\|f_n\|_{\mathrm{BMO}(dw)}
    \leq CM.
\]
Hence, by \eqref{eq:H1-BMO-duality}, the associated
functionals $\Phi_{f_n}$ form a bounded sequence
in $(H^1_{\Delta_k})^*$.
The Banach--Alaoglu theorem and the separability
of $H^1_{\Delta_k}$ yield a subsequence, still denoted
by $(f_n)$, and $f\in\mathrm{BMO}(dw)$ such that
\begin{equation}\label{eq:boundary-weak-star-limit}
    \langle\Phi_{f_n},h\rangle
    \longrightarrow
    \langle\Phi_f,h\rangle
    \qquad\text{for every }h\in H^1_{\Delta_k}.
\end{equation}
Moreover,
\begin{equation}\label{eq:boundary-BMO-bound}
    \|f\|_{\mathrm{BMO}(dw)}\leq CM.
\end{equation}
Choose any representative of the BMO class $f$.
By \eqref{eq:BMO-weighted-integrability},
$f\in L^1(\rho\,dw)$, so its Poisson integral
is absolutely convergent.

Fix $t>0$ and $\mathbf x\in\mathbb R^N$, and define
\[
    h_{t,\mathbf x}(\mathbf y)
    =p_t(\mathbf x,\mathbf y)-p_1(0,\mathbf y).
\]
By \eqref{eq:Poisson-mass},
\[
    \int_{\mathbb R^N}h_{t,\mathbf x}(\mathbf y)
        \,dw(\mathbf y)=0.
\]
Furthermore, \eqref{eq:volume-estimate} and
\eqref{eq:Poisson-size} imply
\[
    p_t(\mathbf x,\mathbf y)
    \leq C_{t,\mathbf x}
        (1+\|\mathbf y\|)^{-\mathbf N-1}.
\]
Indeed, for $\|\mathbf y\|$ sufficiently large,
$d(\mathbf x,\mathbf y)\asymp\|\mathbf y\|$
and
\[
    w(B(\mathbf x,t+d(\mathbf x,\mathbf y)))
    \asymp\|\mathbf y\|^{\mathbf N};
\]
on bounded sets the assertion follows from
continuity of the kernel.
Together with \eqref{eq:Poisson-at-origin}, this gives
\[
    |h_{t,\mathbf x}(\mathbf y)|
    \leq C_{t,\mathbf x}
        (1+\|\mathbf y\|)^{-\mathbf N-1}.
\]
Lemma~\ref{lem:decaying-Hardy-function} therefore
shows that $h_{t,\mathbf x}\in H^1_{\Delta_k}$.

Since $f_n,f\in L^1(\rho\,dw)$, all the products
$f_nh_{t,\mathbf x}$ and $fh_{t,\mathbf x}$ are
integrable. Thus
\eqref{eq:absolutely-integrable-pairing} and
\eqref{eq:boundary-weak-star-limit} imply
\begin{equation}\label{eq:Poisson-difference-limit}
\begin{aligned}
    P_tf_n(\mathbf x)-P_1f_n(0)
    &=\langle\Phi_{f_n},h_{t,\mathbf x}\rangle\longrightarrow
      \langle\Phi_f,h_{t,\mathbf x}\rangle=P_tf(\mathbf x)-P_1f(0).
\end{aligned}
\end{equation}
On the other hand, Proposition~\ref{prop:vertical-translates} gives
\[
    P_tf_n(\mathbf x)=u(t+n^{-1},\mathbf x),
    \qquad
    P_1f_n(0)=u(1+n^{-1},0).
\]
Passing to the limit in
\eqref{eq:Poisson-difference-limit} and using
continuity of $u$, we obtain
\[
    u(t,\mathbf x)-u(1,0)
    =P_tf(\mathbf x)-P_1f(0).
\]

Finally, set
\[
    c=u(1,0)-P_1f(0),
    \qquad
    F=f+c.
\]
Since $P_t1=1$, we have
\[
    u(t,\mathbf x)=P_tF(\mathbf x)
    =\int_{\mathbb R^N}
        p_t(\mathbf x,\mathbf y)F(\mathbf y)
        \,dw(\mathbf y).
\]
The integral is absolutely convergent, and
\eqref{eq:boundary-BMO-bound} yields
\[
    \|F\|_{\mathrm{BMO}(dw)}
    =\|f\|_{\mathrm{BMO}(dw)}
    \leq C\|u\|_{**}.
\]
This proves the theorem.
\end{proof}

\end{document}